\documentclass[final]{siamltex}

\usepackage{amssymb,amsmath,amsfonts,graphicx,subfigure,pstricks,color}

\newtheorem{remark}[theorem]{Remark}
\newtheorem{example}[theorem]{Example}

\newcommand{\R}{\mathbb{R}}

\newcommand{\ol}[1]{\overline{#1}}

\newcommand{\dd}{\text{d}}

\newcommand{\norm}[1]{\Vert #1 \Vert}
\newcommand{\abs}[1]{\vert #1 \vert}

\title{Exact Relaxation for Classes of Minimization Problems with Binary Constraints\thanks{M.H. acknowledges support by the German Research Fund (DFG) through the Research Center MATHEON Project C28 and the SPP 1253 "Optimization with Partial Differential Equations", the Austrian Science Fund (FWF) through the START Project Y 305-N18 "Interfaces and Free Boundaries" and the SFB Project F32 04-N18 "Mathematical Optimization and Its Applications in Biomedical Sciences", and support through a J. Tinsley Oden Fellowship at the Institute for Computational Engineering and Sciences (ICES) at UT Austin, Texas, USA. M.B. acknowledges support by the German Research Fund (DFG) through the projects BU 2327/1 and BU 2327/6 and the SFB 656/B2.}}

\author{Martin Burger \thanks{Institut f\"ur Numerische und Angewandte Mathematik, Westf\"alische Wilhelms-Universit\"at M\"unster, Einsteinstr. 62, 48149 M\"unster, Germany. (martin.burger@wwu.de)}
\and Yiqiu Dong \thanks{Institute of Biomathematics and Biometry, Helmholtz Zentrum M\"unchen, Ingolst\"adter Landstrasse 1, 85764 Neuherberg, Germany. (yiqiu.dong@helmholtz-muenchen.de)}
\and Michael Hinterm\"uller \thanks{Department of Mathematics, Humboldt-University of Berlin, Unter den Linden 6, 10099 Berlin, Germany, and START-Project ``Interfaces and Free Boundaries'' and SFB ``Mathematical Optimization and Applications in Biomedical Science'', Institute of Mathematics and Scientific Computing, University of
Graz, Heinrichstrasse 36, A-8010 Graz, Austria. (hint@math.hu-berlin.de)}}
\begin{document}
\maketitle

\begin{abstract}
Relying on the co-area formula, an exact relaxation framework for minimizing objectives involving the total variation of a binary valued function (of bounded variation) is presented. The underlying problem class covers many important applications ranging from binary image restoration, segmentation, minimal compliance topology optimization to the optimal design of composite membranes and many more. The relaxation approach turns the binary constraint into a box constraint. It is shown that thresholding a solution of the relaxed problem almost surely yields a solution of the original binary-valued problem. Furthermore, stability of solutions under data perturbations is studied, and, for applications such as structure optimization, the inclusion of volume constraints is considered. For the efficient numerical solution of the relaxed problem, a locally superlinearly convergent algorithm is proposed which is based on an exact penalization technique, Fenchel duality, and a semismooth Newton approach. The paper ends by a report on numerical results for several applications in particular in mathematical image processing.
\end{abstract}

\begin{keywords}
Composite membranes, exact relaxation, Fenchel duality, image denoising, level set methods, minimal compliance, multi-phase image segmentation, shape optimization, topology optimization, total variation regularization.
\end{keywords}

\begin{AMS}
65K05, 49M29, 49M20, 90C90, 90C26. 
\end{AMS}

\section{Introduction} 
We consider the binary constrained minimization problem
\begin{equation}
\min_{(u,v)\in BV(\Omega;\{0,1\})\times\mathcal{V}} H(u,v):=F(v) + \int_\Omega G(v)~u~\dd x + \beta~ J(u),\\
\label{full_model}
%\\ H(v) + K(v) u &=& 0. \label{problem0b}
\end{equation}
where ${\cal V}$ is a reflexive real Banach space, $F: {\cal V} \rightarrow \R$ is a (sequentially) weakly lower semicontinuous functional, which is 
bounded from below, $G:{\cal V} \rightarrow L^p(\Omega)$ is a strongly continuous nonlinear operator for 
some $p > 1$, $\Omega\subset\mathbb{R}^d$, $d\in\mathbb{N}$, is a bounded and piecewise smooth open set. By $BV(\Omega;S)$ we denote the space of functions of bounded variation, where $(S;\delta)$ is a locally compact metric space with some set $S$ and a metric $\delta$ on $S$. According to \cite{MR1079985}, a Borel function $u:\Omega\to S$ is an element of $BV(\Omega;S)$ if there exists a finite measure $\mathfrak{m}$ such that $\mathfrak{m}(B)\geq�\int_B|D\varphi(u)|=|D\varphi(u)|(B)$ for all Borel sets $B\subset\Omega$ and for any Lipschitz function $\varphi:S\to\mathbb{R}$ with a Lipschitz constant not larger than $1$, i.e., $\varphi\in\operatorname{Lip}_1(S)$.
Here, $|D\varphi(u)|(\cdot)$ represents the usual total variation measure of a function $\varphi(u)\in BV(\Omega)$; see, e.g., \cite{BVBook2}. In fact, it is well-known that $w\in BV(\Omega)$ iff $w\in L^1(\Omega)$ and the total variation measure ($BV$-seminorm)
\[
\int_\Omega |Dw|=\sup\bigg\{\int_\Omega w\;\mathrm{div} \vec{v} \;
dx:\vec{v}\in(C_0^\infty(\Omega))^d,\|\vec{v}\|_{\infty}\leq1\bigg\}
\]
is finite. In the definition of $\int_\Omega |Dw|$, $(C_0^\infty(\Omega))^d$ is the space of $\mathbb{R}^d$-valued functions with compact support in $\Omega$. The space $BV(\Omega)$ endowed with the norm $\|w\|_{BV(\Omega)}=\|w\|_{L^1(\Omega)}+\int_\Omega |Dw|$ is a Banach space; see, e.g., \cite{BVBook2}.  
In \eqref{full_model}, we have $(S;\delta):=(\{0,1\};|\cdot|)$ and the total variation (TV) of $u\in BV(\Omega;S)$ is given by $$J(u):=\int_\Omega |Du|:=\mathcal{M}-\sup\left\{|D\varphi (u)|:\varphi\in\operatorname{Lip}_1(S)\right\},$$ which is coupled to the remaining terms of the objective by $\beta>0$. Here, $\mathcal{M}-\sup$ defines the supremum over $\sigma$-additive measures over the Borel sets in $\Omega$; see \cite{MR1079985}.

%,
%$H: {\cal V} \rightarrow {\cal W}$ and $K: {\cal V} \rightarrow {\cal L}(L^2(\Omega);{\cal W})$ 
%are strongly continuous operators. 

Problems of the type considered in \eqref{full_model} are of fundamental importance in many application areas. Indeed, in imaging science, both the TV-based binary image denoising model \cite{TV1,TVmodel} and the piecewise constant Mumford-Shah segmentation model \cite{Chan_Vese,Mumford-Shah} are respectively equivalent to \eqref{full_model} with $F\equiv0$ and $G(\cdot)=g\in L^p(\Omega)$ for $p>1$, where the binary variable $u$ represents the binary image, i.e. the support set of image features, which is recovered from noisy data contained in $g$. In the optimal design of composite membranes \cite{OD_1,OD_2}, the minimization problem of the first eigenvalue for an underlying elliptic problem fits into the structure considered in \eqref{full_model}, where $v$ is the corresponding eigenfunction and the binary-valued $u$ represents the optimal configuration. Furthermore, \eqref{full_model} also appears in the optimization of the topology of structures, e.g., in the well-known minimal compliance problem; see, e.g., \cite{TOP_Book2,TOP_Book}. In the latter case, $v$ denotes the elastic displacement and $u$ the material density.  

Clearly, the binary constraint on $u$, contained implicitly in the definition of the underlying function space, poses significant analytical as well as numerical challenges. In fact, the binary constraint renders \eqref{full_model} non-convex
and combinatorial in nature. Combined with the non-differentiability of $H$ in the first variable (which is due to the presence of $J$), the latter property challenges the development of efficient numerical solvers. Due to these mathematical difficulties and the wide applicability, recently problems with binary constraints have become the focus of research efforts in different model frameworks; see, e.g., \cite{Filter_TOP,analysisL1TV1,CEM-2006,OD_1}. %\textcolor{red}{(???Maybe add more refereces? ???)}. 

In this paper, starting from the general model \eqref{full_model} we propose a relaxation framework for \eqref{full_model} by replacing $u(x)\in\{0,1\}$ by the box constraint $u(x)\in[0,1]$. Interestingly, it turns out that this relaxation is exact, in the sense that a minimizer of \eqref{full_model} can be obtained almost surely from thresholding a solution to the relaxed problem with $u(x)\in[0,1]$ for almost all $x\in\Omega$. We also provide a stability result with respect to perturbations of the operators $G$, and we incorporate volume constraints, which are important, e.g., in topology optimization. Numerically, the box constraint on $u$ is realized by an exact penalization, and considering the nondifferentiable TV-term in the objective, a Fenchel duality technique is utilized to solve the relaxed problem on $u$. Specifically, the pre-dual to the relaxation is a quadratic optimization problem with box constraints, which is solved by an inexact semismooth Newton method. The resulting algorithm converges locally at a superlinear rate, and the efficiency of the proposed solver is demonstrated by means of several applications of \eqref{full_model}.

The outline of this paper is as follows. In Section 2, we study the exact relaxation of \eqref{full_model} and stability results. Moreover, the situation with an additional volume constraint is considered. Section 3 describes our algorithmic framework. Several applications including numerical results are shown in Section 4, and, finally, conclusions are drawn in Section 5.

\section{Exact relaxation in BV-space}

We start our analysis of \eqref{full_model} by focusing on the binary constrained variable $u$ and by considering an auxiliary problem. After its analysis we return to the full model considered in \eqref{full_model}.

\subsection{An auxiliary problem}
The auxiliary problem, which simplifies \eqref{full_model}, consists of a linear term and the TV regularization only, i.e., we have
\begin{equation}
\min\limits_{u\in BV(\Omega;\{0,1\})} \int_\Omega g~u~\dd x + \beta~ J(u),\\
\label{problem1}
\end{equation}
where we assume that $g \in L^p(\Omega)$ for some $p>1$. Obviously, \eqref{problem1} is non-convex due to the binary constraint on $u$. Convexity is obtained by considering the relaxation
\begin{equation}
\min\limits_{u\in BV(\Omega;[0,1])} \int_\Omega g~u~\dd x + \beta~ J(u).\\
\label{problem1relaxed}
\end{equation}
The existence of a minimizer to \eqref{problem1} and \eqref{problem1relaxed}, respectively, is addressed next.

\begin{proposition}\label{prop2.1}
Problem \eqref{problem1} and problem \eqref{problem1relaxed} admit minimizers, respectively.
\end{proposition}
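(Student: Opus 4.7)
\medskip

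The plan is to establish existence for both problems via the direct method of the calculus of variations, treating them in parallel since $BV(\Omega;\{0,1\})\subset BV(\Omega;[0,1])\subset BV(\Omega)$ and the objective is identical. First I would verify coercivity of the objective on the admissible set. The pointwise box constraint $0\le u \le 1$ a.e., together with the boundedness of $\Omega$, yields $\|u\|_{L^1(\Omega)}\le |\Omega|$. Since $g\in L^p(\Omega)\subset L^1(\Omega)$ (as $\Omega$ is bounded), H\"older's inequality gives a uniform lower bound on the linear term, namely $\int_\Omega g u\,\dd x \ge -\|g\|_{L^1(\Omega)}$. Combined with $J(u)\ge 0$, the objective is bounded from below on both admissible sets, so the infimum is finite and a minimizing sequence $(u_n)$ exists.

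Next I would extract a BV-bound. Fixing some admissible comparison function (e.g.\ $u\equiv 0$) shows that the infimum is $\le 0$, hence for large $n$,
\[
\beta J(u_n) \le -\int_\Omega g u_n\,\dd x + 1 \le \|g\|_{L^1(\Omega)} + 1,
\]
so $J(u_n)$ is uniformly bounded. Together with the uniform $L^1$-bound this yields a uniform bound on $\|u_n\|_{BV(\Omega)}$. By the compact embedding $BV(\Omega)\embedded L^1(\Omega)$, there is a subsequence (still denoted $(u_n)$) and some $u^* \in L^1(\Omega)$ with $u_n\to u^*$ in $L^1(\Omega)$; extracting a further subsequence, I may also assume $u_n\to u^*$ pointwise a.e.\ in $\Omega$.

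The pointwise a.e.\ convergence is the key step for preserving the constraint: since each $u_n$ takes values in the closed set $\{0,1\}$ (respectively $[0,1]$) a.e., the same holds for $u^*$. The lower semicontinuity of the total variation under $L^1$-convergence (a standard property of $\int_\Omega|Du|$) gives $J(u^*)\le \liminf_n J(u_n)$, which in particular certifies $u^*\in BV(\Omega)$ and hence $u^*$ is admissible for the respective problem. For the linear term, $|g u_n|\le |g|\in L^1(\Omega)$ provides an integrable dominant, so Lebesgue's dominated convergence theorem yields $\int_\Omega g u_n\,\dd x \to \int_\Omega g u^*\,\dd x$. Adding the two limits shows that $u^*$ attains the infimum in each case.

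The only genuine subtlety, and the single place where the two problems differ, is the preservation of the binary constraint in the limit; this is what forces the reduction to a pointwise a.e.\ convergent subsequence rather than merely using the $L^1$ (or weak-$*$ BV) limit. Everything else is routine: coercivity from the box constraint and $g\in L^p$, compactness from $BV\embedded L^1$, lower semicontinuity of $J$, and continuity of the linear functional via dominated convergence.
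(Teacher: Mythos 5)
Your proof is correct, and it is the same overall strategy as the paper's (the direct method), but the compactness and continuity mechanisms differ enough to be worth noting. The paper bounds the linear term from below exactly as you do, concludes that the total variation is uniformly bounded on sublevel sets, and then uses weak-* compactness of these sets in $BV(\Omega;S)\cap L^q(\Omega)$ with $q=\frac{p}{p-1}$: the linear term then passes to the limit because $u_n\rightharpoonup u$ weakly in $L^q$ against $g\in L^p$, and $J$ is weakly lower semicontinuous (citing \cite{MR1079985}, \cite{BVBook2}). You instead invoke the compact embedding $BV(\Omega)\hookrightarrow L^1(\Omega)$ (which the standing assumption that $\Omega$ is bounded and piecewise smooth justifies), extract a strongly $L^1$- and then a.e.-convergent subsequence, preserve the constraint $u^*\in\{0,1\}$ (resp.\ $[0,1]$) a.e.\ by pointwise closedness, and treat the linear term by dominated convergence with dominant $|g|$. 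Your route is slightly more elementary and has the merit of making explicit why the binary constraint survives the limit, a point the paper leaves implicit in its assertion of weak-* compactness of level sets in $BV(\Omega;S)\cap L^q$; the paper's route, pairing $g\in L^p$ with weak $L^q$ (and weak-* $L^\infty$) convergence, is the one that carries over directly to the full problem \eqref{full_model}, where the analogous continuity argument for $\int_\Omega G(v^k)u^k\,\dd x$ cannot rely on a.e.\ convergence of $u^k$. Both arguments are complete and yield the claimed existence for \eqref{problem1} and \eqref{problem1relaxed}.
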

\begin{proof}
For any $u \in BV(\Omega;S)$ with either $S=\{0,1\}$ or $S=[0,1]$ we have
$$ \int_\Omega g~u~\dd x \geq -\|g\|_{L^1(\Omega)}. $$          %\int_\Omega g_-~\dd x ,$$
%where $g_-(x) = \inf\{g(x),0\}$ for almost every $x \in \Omega$. 
Thus, on sublevel sets of 
the objective functionals in \eqref{problem1} and \eqref{problem1relaxed}, respectively, 
the total variation is uniformly bounded. Since the elements are additionally bounded in 
$L^\infty(\Omega) \hookrightarrow L^q(\Omega)$ for all $q < \infty$, we obtain the weak-* compactness
of level sets in $BV(\Omega;S) \cap L^q(\Omega)$. Based on the weak lower semicontinuity of $J$  according to \cite[Thm. 2.4]{MR1079985} (see also \cite[Thm. 1.9]{BVBook2} for $BV(\Omega)$), choosing $q=\frac{p}{p-1}$ we also obtain the weak lower semicontinuity of the objective functional and thus the existence of a minimizer 
in either case.
\end{proof}

The main result of this section, Theorem \ref{mainthm1} below, states that the relaxation \eqref{problem1relaxed} of
\eqref{problem1} is exact, in the sense that minimizers of the original
problem \eqref{problem1} are obtained from minimizers of its relaxation. Theorem \ref{mainthm1} also provides a
constructive way to compute (almost surely) minimizers of \eqref{problem1} by taking level sets of minimizers
of \eqref{problem1relaxed}. This result extends a corresponding observation in \cite{analysisL1TV1}, where total variation based image denoising with an $L^1$ data fidelity and given image data corresponding to a characteristic function of a bounded domain is considered. For its proof we recall that the perimeter $\text{Per}(E)$ of a measurable set $E\subset\Omega$ relative to $\Omega$ satisfies $\text{Per}(E)=\int_\Omega |D \chi_E|$, where $\chi_E$ denotes the characteristic function of $E$ relative to $\Omega$.
\begin{theorem} \label{mainthm1}
Let $u \in BV(\Omega;[0,1])$ be a minimizer of \eqref{problem1relaxed}. Then, for 
almost every $t \in (0,1)$, the function $u^t \in BV(\Omega;\{0,1\})$ defined as 
the indicator function of the level set $\{ u > t\}$, i.e.,
$$ u^t(x) = \left\{ \begin{array}{ll} 1, & \text{if } u(x) > t, \\ 0, &\text{else,} \end{array} \right. $$
is a minimizer of \eqref{problem1} and \eqref{problem1relaxed}.
\end{theorem}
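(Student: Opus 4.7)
The plan is to exploit two classical representations. First, the layer-cake formula gives $u(x) = \int_0^1 u^t(x) \, dt$ pointwise a.e., so
\[
\int_\Omega g \, u \, \dd x = \int_0^1 \int_\Omega g \, u^t \, \dd x \, dt,
\]
by Fubini (valid since $g \in L^1(\Omega)$ and $0 \le u^t \le 1$). Second, the coarea formula applied to $u \in BV(\Omega;[0,1])$ yields
\[
J(u) = \int_\Omega |Du| = \int_0^1 \mathrm{Per}(\{u>t\}) \, dt = \int_0^1 J(u^t) \, dt.
\]
Combining these two identities and denoting the objective of \eqref{problem1relaxed} by $E(u):=\int_\Omega g\,u\,\dd x + \beta J(u)$, I obtain the decomposition $E(u) = \int_0^1 E(u^t)\,dt$.

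Next I would invoke minimality. By Proposition \ref{prop2.1} a minimizer $u$ of \eqref{problem1relaxed} exists, and for every $t \in (0,1)$ the level set function $u^t$ belongs to $BV(\Omega;\{0,1\}) \subset BV(\Omega;[0,1])$; in particular, $u^t$ is admissible for the relaxed problem. Consequently $E(u) \le E(u^t)$ for a.e.\ $t \in (0,1)$ (the coarea formula guarantees $u^t \in BV(\Omega)$ for a.e.\ $t$, which is the only exceptional set). Integrating this inequality against $dt$ on $(0,1)$ gives $E(u) \le \int_0^1 E(u^t) \, dt = E(u)$, so in fact $E(u^t) = E(u)$ for almost every $t \in (0,1)$. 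Thus each such $u^t$ is itself a minimizer of the relaxed problem.

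Finally, I would upgrade this to minimality for the binary problem \eqref{problem1}. Since $BV(\Omega;\{0,1\}) \subset BV(\Omega;[0,1])$, the infimum in \eqref{problem1relaxed} is bounded above by the infimum in \eqref{problem1}. Conversely, $u^t$ is both binary-valued and attains the relaxed minimum, so the two infima coincide and $u^t$ minimizes \eqref{problem1} as well.

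The only technical subtlety I anticipate is ensuring the coarea decomposition is applicable in the form stated: one needs that $\text{Per}(\{u>t\})$ is a measurable function of $t$, that $u^t \in BV(\Omega)$ for a.e.\ $t$, and that the identity $J(u)=\int_0^1 J(u^t)\,dt$ holds for $BV$-functions taking values in $[0,1]$. These facts are standard (see, e.g., the $BV$ references cited in the paper), so no real obstacle arises; the argument is essentially a one-line consequence of coarea plus the ``average equals minimum forces pointwise equality'' observation.
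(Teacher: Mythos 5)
Your proof is correct and follows essentially the same route as the paper: both rest on the coarea/layer-cake decomposition $E(u)=\int_0^1 E(u^t)\,\dd t$ of the objective and then exploit minimality. The only (minor) difference is the finish: the paper compares with a minimizer $v$ of the binary problem \eqref{problem1} and argues by contradiction, whereas you compare $E(u^t)$ directly with $E(u)$ and use that an average equal to the minimum forces equality for a.e.\ $t$, which streamlines the argument slightly by not requiring the existence of a binary minimizer.
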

\begin{proof}
Let $u \in BV(\Omega;[0,1])$ be a minimizer of \eqref{problem1relaxed} and
$v \in BV(\Omega;\{0,1\})$ be a minimizer of \eqref{problem1}.
 Then, due to the coarea formula \cite[Thm. I]{coarea} and Fubini's theorem we have
\begin{eqnarray*}
 \int_\Omega g~u~\dd x +\beta~ J(u) &=& \int_\Omega \int_0^{u(x)} g(x)~\dd t~\dd x+
 \beta~\int_{0}^1 \text{Per}(\{ u > t\})~\dd t  \\
 &=& \int_\Omega \int_0^1 g(x)~ u^t(x)~~\dd t~\dd x + \beta~\int_{0}^1 J(u^t)~\dd t \\
 &=& \int_0^1 \left( \int_\Omega g~u^t~\dd x +\beta~ J(u^t) \right)~\dd t \\
 &\geq& \int_0^1 \left( \int_\Omega g~v~\dd x +\beta~ J(v) \right)~\dd t \\
 &=& \int_\Omega g~v~\dd x +\beta~ J(v).
\end{eqnarray*}
Here we have used the fact that $u^t \in BV(\Omega;\{0,1\})$ to obtain the above inequality. One immediately observes from the above estimate that
$$ \int_\Omega g~u~\dd x +\beta~ J(u)  > \int_\Omega g~v~\dd x + \beta~J(v) $$
if $u^t$ is not a minimizer of \eqref{problem1}, which contradicts the minimizing
property of $u$. Hence, we conclude the assertion. 
\end{proof}

Based on Theorem \ref{mainthm1}, algorithmically we may find a minimizer of the binary constrained non-convex problem \eqref{problem1} by solving its convex relaxation \eqref{problem1relaxed} followed by a thresholding step.  
Further note that since the objective of \eqref{problem1relaxed} is convex but not strictly convex, we cannot expect uniqueness of the solution to the minimization problem. However, in situations where the solution
is indeed unique, we conclude from Theorem \ref{mainthm1} that the minimizer of \eqref{problem1} is unique and is the same as for \eqref{problem1relaxed}.
\begin{corollary}
If the minimizer $u$ of \eqref{problem1relaxed} is unique, then $u \in BV(\Omega;\{0,1\})$ 
and $u$ is the unique minimizer of \eqref{problem1}.
\end{corollary}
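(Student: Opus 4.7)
The plan is to apply Theorem \ref{mainthm1} and leverage uniqueness to force $u$ to coincide almost everywhere with one of its thresholded indicator functions. First, I would invoke Theorem \ref{mainthm1} to conclude that for almost every $t \in (0,1)$, the function $u^t = \chi_{\{u>t\}}$ is itself a minimizer of the relaxed problem \eqref{problem1relaxed}. By the uniqueness hypothesis on the minimizer of \eqref{problem1relaxed}, this forces $u = u^t$ almost everywhere in $\Omega$ for almost every $t \in (0,1)$.

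Fix any such $t$. Since $u^t$ takes values in $\{0,1\}$ by construction, the equality $u = u^t$ a.e.\ immediately implies $u \in BV(\Omega;\{0,1\})$. Hence $u$ is feasible for \eqref{problem1}. Moreover, Theorem \ref{mainthm1} also asserts that $u^t$ is a minimizer of the binary problem \eqref{problem1}, and since $u$ and $u^t$ agree a.e., the common objective value of \eqref{problem1relaxed} attained at $u$ equals the optimal value of \eqref{problem1}. Because the feasible set of \eqref{problem1} is contained in that of \eqref{problem1relaxed}, this shows that $u$ is itself a minimizer of \eqref{problem1}.

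For uniqueness within \eqref{problem1}, suppose $\tilde u \in BV(\Omega;\{0,1\})$ is any minimizer of \eqref{problem1}. Then $\tilde u$ is admissible for \eqref{problem1relaxed} and attains the value computed above, so $\tilde u$ is a minimizer of \eqref{problem1relaxed} as well. The uniqueness assumption then yields $\tilde u = u$, which completes the argument.

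The only mildly delicate point is justifying the transition from ``$u = u^t$ a.e.\ for a.e.\ $t$'' to ``$u$ is $\{0,1\}$-valued a.e.''; however, picking a single admissible $t$ from the full measure set of good thresholds is enough, since a function that equals a $\{0,1\}$-valued function almost everywhere is itself $\{0,1\}$-valued almost everywhere. No further compactness or semicontinuity arguments are needed, as all the heavy lifting has already been done in Proposition \ref{prop2.1} and Theorem \ref{mainthm1}.
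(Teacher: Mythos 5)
Your proposal is correct and follows exactly the route the paper intends: the corollary is stated as an immediate consequence of Theorem \ref{mainthm1}, with uniqueness forcing $u=u^t$ a.e.\ for some admissible threshold $t$, hence $u$ is binary, and any binary minimizer is also a relaxed minimizer and so coincides with $u$. No gaps; your handling of the ``fix one good $t$'' point is the right justification.
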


\subsection{The full problem}

Next we return to the full problem \eqref{full_model}. The associated relaxed version reads as follows:
\begin{equation}\label{problem0b}
\min_{(u,v)\in BV(\Omega;[0,1])\times\mathcal{V}} H(u,v):=F(v) + \int_\Omega G(v)~u~\dd x +\beta~ J(u).
\end{equation}
For the existence of a minimizer to \eqref{full_model} and \eqref{problem0b}, respectively, we first discuss the sequential weak lower semicontinuity of the objective functional. 
\begin{lemma}
Suppose that $\{v^k\}$ converges weakly in $\mathcal{V}$ to $v\in\mathcal{V}$, and $\{u^k\}$ weak-* converges in $\text{BV}(\Omega;[0,1]) \cap L^\infty(\Omega)$ to $u\in\text{BV}(\Omega;[0,1]) \cap L^\infty(\Omega)$ as $k\to+\infty$. Then, it holds that
$$ F(v) + \int_\Omega G(v)u~\dd x +\beta~ J(u) \leq 
\liminf_{k\to+\infty} \left( F(v^k) + \int_\Omega G(v^k)u^k~\dd x + \beta~J(u^k)\right) . $$
\end{lemma}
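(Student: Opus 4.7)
The plan is to split the functional into its three constituent terms $F(v)$, $\int_\Omega G(v)u\,\dd x$, and $\beta J(u)$, and to treat each contribution separately. Two of the three are handled by direct lower semicontinuity, while the bilinear cross term actually converges, which is more than the claim requires.

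The terms $F(v)$ and $\beta J(u)$ require nothing beyond the standing hypotheses on \eqref{full_model}. Weak lower semicontinuity of $F$ on $\mathcal{V}$ gives $F(v)\leq \liminf_{k\to\infty} F(v^k)$ from $v^k\weak v$; and the weak-$*$ lower semicontinuity of the total variation on $BV(\Omega)$, already invoked in the proof of Proposition~\ref{prop2.1}, gives $J(u)\leq \liminf_{k\to\infty} J(u^k)$.

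The heart of the argument is to establish the convergence
\[
 \int_\Omega G(v^k)u^k\,\dd x \;\longrightarrow\; \int_\Omega G(v)u\,\dd x .
\]
On the $u$-side, weak-$*$ convergence in $BV(\Omega)$ combined with the compact embedding $BV(\Omega)\embedded L^1(\Omega)$ yields $u^k\to u$ strongly in $L^1(\Omega)$; exploiting additionally the uniform bound $\|u^k\|_{L^\infty}\leq 1$, interpolation promotes this to strong convergence in $L^q(\Omega)$ for every $q\in[1,\infty)$, in particular for the conjugate exponent $q=p/(p-1)$. On the $v$-side, the strong continuity of $G:\mathcal{V}\to L^p(\Omega)$ turns $v^k\weak v$ into norm convergence $G(v^k)\to G(v)$ in $L^p(\Omega)$. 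Writing
\[
\int_\Omega G(v^k)u^k\,\dd x - \int_\Omega G(v)u\,\dd x = \int_\Omega (G(v^k)-G(v))u^k\,\dd x + \int_\Omega G(v)(u^k-u)\,\dd x,
\]
H\"older's inequality with exponents $p$ and $p/(p-1)$ applied to each summand makes both tend to zero.

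Adding the three estimates delivers the claimed liminf inequality. The only non-routine step, and the one I would verify most carefully, is the passage from weak-$*$ convergence in $BV(\Omega)$ to strong $L^{p/(p-1)}$-convergence of the densities $u^k$: it hinges on the compact embedding $BV\embedded L^1$ together with the $L^\infty$-bound provided by the constraint $u^k(x)\in[0,1]$. Once this strong convergence is in hand, the remainder of the proof is a routine H\"older estimate combined with the continuity hypotheses on $F$ and $G$.
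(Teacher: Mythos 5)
Your proof is correct, and its overall architecture coincides with the paper's: split the objective into the three terms, use sequential weak lower semicontinuity of $F$, lower semicontinuity of $J$ along the weak-$*$ (equivalently, strong $L^1$) convergence of $u^k$, and show that the mixed term actually converges, after which superadditivity of the liminf finishes the argument. The one place where you deviate is the mechanism for the mixed term. The paper pairs strong convergence $G(v^k)\to G(v)$ in $L^1(\Omega)$ with the weak-$*$ convergence of $u^k$ in $L^\infty(\Omega)$ (plus the uniform bound $0\le u^k\le 1$), which needs no compactness of any embedding and only $L^1$-convergence of $G(v^k)$. You instead upgrade the convergence of $u^k$: strong $L^1$ convergence (which in fact is already part of the definition of weak-$*$ convergence in $BV$, so the compact embedding $BV(\Omega)\hookrightarrow L^1(\Omega)$ is not strictly needed, though it is available since $\Omega$ is bounded with piecewise smooth boundary) is interpolated against the uniform $L^\infty$ bound to give strong convergence in $L^{p/(p-1)}(\Omega)$, and then H\"older with the strong $L^p$-convergence of $G(v^k)$ closes the estimate. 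Both routes are routine and valid; yours proves slightly more (strong $L^q$-convergence of the densities for every finite $q$), while the paper's weak-$*$/strong duality pairing is a bit more economical in its hypotheses on the domain and on the integrability used. Your flagged "non-routine step" is indeed the only point requiring care, and it holds for exactly the reasons you give.
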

\begin{proof}
Due to the strong continuity of $G$, we obtain that $G(v^k)$ strongly converges to $G(v)$ in $L^1(\Omega)$. Because of the weak-* convergence of $u^k$ in $L^\infty(\Omega)$, we have 
$$ \int_\Omega G(v^k)u^k~\dd x  \rightarrow \int_\Omega G(v)u~\dd x\quad\text{as }k\rightarrow+\infty.$$
Since $F$ is sequentially weakly lower semicontinuous on ${\cal V}$ and $J$ is lower 
semicontinuous in the strong topology of $L^1(\Omega)$, we obtain 
\begin{eqnarray*}
&& \liminf_{k\to+\infty} \left( F(v^k) + \int_\Omega G(v^k)u^k~\dd x + \beta~J(u^k)\right) \\
&& \qquad \qquad \geq  \liminf_{k\to+\infty} F(v^k) + \lim_{k\to+\infty} \int_\Omega G(v^k)u^k~\dd x + 
\beta~\liminf_{k\to+\infty} J(u^k) \\
&& \qquad \qquad \geq F(v) + \int_\Omega G(v)u~\dd x +\beta~ J(u),
\end{eqnarray*}
which completes the proof.
\end{proof}

Another important property in the analysis of the minimization problem is the compactness of level sets associated with the objective function.
\begin{lemma} \label{fullcompactnesslemma}
Assume that the set 
$$\{F(v) - \epsilon \norm{G(v)}_{L^p}^2~|~v\in {\cal V}\}$$ 
is precompact in the weak topology of ${\cal V}$ for some $\epsilon > 0$
or for $\epsilon = 0$ with the additional condition that 
$G(\cdot) \geq g_0$ for some $g_0 \in \R$.
Then the sub-level sets of the functional $H$ are precompact in the weak topology of ${\cal V}$
with respect to $v$ and in the weak-* topologies of $BV(\Omega)$ and $L^\infty(\Omega)$ with
respect to $u$, if $0 \leq u \leq 1$ almost everywhere (a.e.) in $\Omega$. 
\end{lemma}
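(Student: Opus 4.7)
The plan is to show that on any sublevel set
$\mathcal{S}_C := \{(u,v)\in BV(\Omega;[0,1])\times\mathcal{V} \mid H(u,v)\leq C\}$
the $v$-components lie in a weakly precompact subset of $\mathcal{V}$ and the $u$-components are uniformly bounded in $BV(\Omega)\cap L^\infty(\Omega)$. Weak-$*$ compactness in $BV(\Omega)\cap L^\infty(\Omega)$ together with the lemma's hypothesis then deliver the assertion.

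My first move is the splitting
$$
H(u,v) = \bigl[F(v)-\epsilon\|G(v)\|_{L^p}^2\bigr] + \Bigl[\epsilon\|G(v)\|_{L^p}^2 + \int_\Omega G(v)\,u\,\dd x\Bigr] + \beta J(u),
$$
which isolates on the left exactly the quantity controlled by the hypothesis. Since $0\leq u\leq 1$ a.e.\ and $|\Omega|<\infty$, one has $\|u\|_{L^q}\leq |\Omega|^{1/q}$ for the conjugate exponent $q=p/(p-1)$, so H\"older's inequality yields $\int_\Omega G(v)u\,\dd x \geq -\|G(v)\|_{L^p}|\Omega|^{1/q}$. For $\epsilon>0$, completion of squares then produces the uniform lower bound
$$
\epsilon\|G(v)\|_{L^p}^2 + \int_\Omega G(v)\,u\,\dd x \;\geq\; -\frac{|\Omega|^{2/q}}{4\epsilon},
$$
while for $\epsilon=0$ the assumption $G(\cdot)\geq g_0$ directly bounds this same term below by $\min(g_0,0)|\Omega|$.

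Since $J(u)\geq 0$, the bound $H(u,v)\leq C$ now produces a uniform upper bound on $F(v)-\epsilon\|G(v)\|_{L^p}^2$ over $\mathcal{S}_C$; by hypothesis, the corresponding $v$'s then lie in a weakly precompact, hence norm-bounded, subset of $\mathcal{V}$. The strong continuity of $G$ combined with weak sequential compactness of bounded sets in the reflexive space $\mathcal{V}$ forces $\|G(v)\|_{L^p}$ to be uniformly bounded on $\mathcal{S}_C$ (via a routine contradiction argument: any unbounded sequence would admit a weakly convergent subsequence whose image is simultaneously strongly convergent and norm-unbounded). Because $F$ is bounded below by assumption, one obtains a matching uniform \emph{lower} bound on $F(v)-\epsilon\|G(v)\|_{L^p}^2$; reinserting into $H(u,v)\leq C$ yields the desired uniform bound on $\beta J(u)$, and combined with $\|u\|_{L^\infty}\leq 1$ the standard weak-$*$ compactness in $BV(\Omega)\cap L^\infty(\Omega)$ closes the argument.

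The main delicacy is the absorption step: one must handle the sign-indefinite cross term $\int_\Omega G(v)u\,\dd x$ while simultaneously isolating exactly the quantity $F(v)-\epsilon\|G(v)\|_{L^p}^2$ appearing in the hypothesis. This is precisely why the lemma is formulated with either $\epsilon>0$ (so that completion of squares is available) or $\epsilon=0$ together with a pointwise lower bound on $G$; once this single inequality is in place, the remainder is routine.
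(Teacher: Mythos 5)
Your proposal is correct and follows essentially the same route as the paper's proof: H\"older plus Young's inequality (your ``completion of squares'') to bound $F(v)-\epsilon\norm{G(v)}_{L^p}^2$ (resp. $F(v)$ when $\epsilon=0$ with $G\geq g_0$) on sublevel sets of $H$, the hypothesis to get weak precompactness in $v$, and then boundedness of $G$ on that weakly precompact set via strong continuity together with $F$ bounded below and $\norm{u}_{L^\infty}\leq 1$ to bound $J(u)$ and $\norm{u}_{BV}$, yielding weak-$*$ precompactness in $u$. The only cosmetic difference is that you bound $\norm{G(v)}_{L^p}$ by an explicit contradiction/subsequence argument where the paper simply invokes boundedness of $\norm{G(v)}_{L^1}$ on weakly compact sets.
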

\begin{proof}
We first study the case $\epsilon>0$.
Using H\"older's inequality, the fact that 
$$\norm{u}_{L^q}^q \leq \int_\Omega u~\dd x \leq \abs{\Omega},$$
which is due to $0\leq u\leq 1$ a.e. in $\Omega$, and Young's inequality, we estimate
$$ H(u,v) \geq F(v) - \norm{G(v)}_{L^p} \norm{u}_{L^q} + \beta~J(u) \geq 
F(v) - \epsilon \norm{G(v)}_{L^p}^2 - \frac{\abs{\Omega}^{2/q}}{4 \epsilon} +\beta~ J(u), $$
where $|\Omega|$ denotes the volume of the set $\Omega$ and $p^{-1}+q^{-1}=1$. Since $J(u)$ is nonnegative, we conclude that boundedness of $H(u,v)$ implies
boundedness of $F(v) - \epsilon \norm{G(v)}_{L^p}^2$. 

In the case where $G(v) \geq g_0$ 
we estimate 
$$ H(u,v) \geq F(v) + \int_\Omega \min\{0,g_0\} ~\dd x +\beta~ J(u) \geq F(v) - \abs{g_0} \abs{\Omega} + \beta~J(u), $$
and thus, boundedness of $H(u,v)$ implies boundedness of $F(v)$. Under the above conditions, 
we obtain in both cases precompactness with respect to $v$ in the weak topology of ${\cal V}$ on sublevel sets of 
$H$. Due to the strong continuity of $G$, $\norm{G(v)}_{L^1}$ is bounded by some constant $c_1>0$ on a weakly compact set in ${\cal V}$, and thus $\int_\Omega G(v) u~\dd x \geq - c_1 \abs{\Omega}$.
Together with the fact that $F$ is bounded from below, we obtain that $J$ is uniformly bounded 
on sublevel sets of $H$. Moreover, we have $\norm{u}_{L^\infty} \leq 1$, which implies 
$\norm{u}_{L^1} \leq \abs{\Omega}$, and hence, $\norm{u}_{BV}$ is uniformly bounded. As a direct consequence
we obtain the weak-* precompactness with respect to $u$. 
\end{proof}

Now we have verified the assumptions of the fundamental theorem of optimization for the functional 
$H$, and as a consequence we establish an existence result.
\begin{corollary}\label{cor.fund}
Under the conditions of Lemma \ref{fullcompactnesslemma} there exists a minimizer of the functional 
$H$ on $BV(\Omega;[0,1]) \times {\cal V}$ as well as on $BV(\Omega;\{0,1\}) \times {\cal V}$.
\end{corollary}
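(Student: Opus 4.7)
The plan is to carry out a direct method of the calculus of variations for the relaxed problem first, and then to bootstrap the binary case from the relaxed case by invoking Theorem \ref{mainthm1} on the reduced auxiliary problem in the $u$-variable.

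For the relaxed problem on $BV(\Omega;[0,1])\times\mathcal{V}$, I would first observe that $H$ is bounded from below. Indeed, the estimates derived in the proof of Lemma \ref{fullcompactnesslemma} (namely $H(u,v)\ge F(v)-\epsilon\|G(v)\|_{L^p}^2-|\Omega|^{2/q}/(4\epsilon)+\beta J(u)$, or $H(u,v)\ge F(v)-|g_0||\Omega|+\beta J(u)$ in the alternative case) together with the assumed boundedness from below of $F$ and the nonnegativity of $J$ yield a finite infimum $m:=\inf H>-\infty$. Next, pick a minimizing sequence $(u^k,v^k)\in BV(\Omega;[0,1])\times\mathcal{V}$. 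Since $H(u^k,v^k)\to m$, the sequence lies in a sublevel set of $H$. Lemma \ref{fullcompactnesslemma} then supplies, after passing to a subsequence, a weak limit $v^*\in\mathcal{V}$ for $v^k$ and a weak-$*$ limit $u^*\in BV(\Omega)\cap L^\infty(\Omega)$ for $u^k$; the bounds $0\le u^k\le 1$ are preserved by weak-$*$ $L^\infty$ convergence, so $u^*\in BV(\Omega;[0,1])$. Applying the weak lower semicontinuity lemma proved above yields $H(u^*,v^*)\le\liminf_{k\to\infty}H(u^k,v^k)=m$, and hence $(u^*,v^*)$ is a minimizer on $BV(\Omega;[0,1])\times\mathcal{V}$.

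For the binary case on $BV(\Omega;\{0,1\})\times\mathcal{V}$ the direct method cannot be applied directly, because the binary constraint is not preserved under weak-$*$ convergence in $L^\infty(\Omega)$. This is the main obstacle. I would circumvent it by a two-step reduction: first invoke the existence result just established to obtain a relaxed minimizer $(u^*,v^*)$, and then freeze $v=v^*$. With $g:=G(v^*)\in L^p(\Omega)$ the $u$-subproblem collapses to the auxiliary problem \eqref{problem1relaxed}, and $u^*$ is a minimizer of that auxiliary problem (otherwise a strict decrease in the $u$-variable would contradict minimality of $(u^*,v^*)$ for $H$). Theorem \ref{mainthm1} then yields, for almost every $t\in(0,1)$, that the thresholded function $u^{*,t}\in BV(\Omega;\{0,1\})$ is a minimizer of both the auxiliary binary problem \eqref{problem1} and its relaxation \eqref{problem1relaxed} for that fixed $g$. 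Consequently $H(u^{*,t},v^*)=H(u^*,v^*)$.

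It remains to check that $(u^{*,t},v^*)$ is a minimizer of $H$ over the \emph{binary} admissible set. Since $BV(\Omega;\{0,1\})\subset BV(\Omega;[0,1])$, one has the trivial inequality $\inf_{BV(\Omega;[0,1])\times\mathcal{V}}H\le\inf_{BV(\Omega;\{0,1\})\times\mathcal{V}}H$. Combining this with the identity $H(u^{*,t},v^*)=\inf_{BV(\Omega;[0,1])\times\mathcal{V}}H$ and the fact that $(u^{*,t},v^*)$ is itself admissible for the binary problem gives equality of the two infima and realization of the binary infimum at $(u^{*,t},v^*)$, completing the proof.
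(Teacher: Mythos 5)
Your proof is correct, but it handles the binary case by a genuinely different route than the paper. The paper proves both existence statements by one and the same direct method: an infimal sequence argument using the precompactness of sublevel sets from Lemma \ref{fullcompactnesslemma} together with the lower semicontinuity results (\cite[Thm. 2.4]{MR1079985}, \cite[Thm. 1.9]{BVBook2}) and the weak lower semicontinuity lemma. The obstacle you cite for the binary case --- that weak-$*$ convergence in $L^\infty(\Omega)$ does not preserve $\{0,1\}$-valuedness --- is not actually an obstacle: the sublevel sets are weak-$*$ precompact in $BV(\Omega)$, and weak-$*$ convergence in $BV$ entails strong $L^1$ convergence, hence a.e. convergence along a further subsequence, so the closed constraint $u(x)\in\{0,1\}$ passes to the limit and the direct method applies verbatim. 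Your workaround --- establish relaxed existence, freeze $v=v^*$, observe that $u^*$ minimizes the auxiliary problem \eqref{problem1relaxed} with $g=G(v^*)\in L^p(\Omega)$, threshold via Theorem \ref{mainthm1}, and compare the two infima --- is valid and non-circular (Theorem \ref{mainthm1} precedes the corollary and does not use it), and it essentially anticipates the proof of Theorem \ref{mainthm2}; it buys a constructive binary minimizer obtained from a relaxed one, at the price of making the binary half of the corollary depend on the exact-relaxation theorem rather than on compactness alone. One small imprecision in your relaxed step: in the case $\epsilon>0$, finiteness of the infimum does not follow from the displayed estimate plus boundedness of $F$ from below, since $F(v)-\epsilon\|G(v)\|_{L^p}^2$ is not assumed bounded below; instead, as in the proof of Lemma \ref{fullcompactnesslemma}, one uses the weak precompactness in $v$ of a fixed sublevel set and the strong continuity of $G$ to bound $\|G(v)\|_{L^1}$ there by some $c_1$, which gives $H(u,v)\ge \inf F - c_1\abs{\Omega}$ on that sublevel set and hence a finite infimum.
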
 

The proof of Corollary \ref{cor.fund} utilizes the usual infimal sequence argument relying on weak-* respectively weak convergence of a subsequence of $\{u^k\}$ and $\{v^k\}$ and the weak lower semicontinuity result of Lemma \ref{fullcompactnesslemma} as well as \cite[Thm. 2.4]{MR1079985} and \cite[Thm. 1.9]{BVBook2}.

We finally derive the main result of this section, a relaxation property similarly to the one obtained in Theorem
\ref{mainthm1}.
\begin{theorem} \label{mainthm2}
Let the assumptions of Lemma \ref{fullcompactnesslemma} be fulfilled and let 
$(\ol u, \ol v) \in BV(\Omega;[0,1]) \times {\cal V}$ be a minimizer of \eqref{problem0b}. Then, for almost every $t \in (0,1)$, the pair $(u^t,\ol v) \in BV(\Omega;\{0,1\}) \times {\cal V}$, 
where $u^t$ is
the indicator function of the level set $\{ \ol u > t\}$, i.e.,
$$ u^t(x) = \left\{ \begin{array}{ll} 1, & \text{if } \ol u > t, \\ 0, &\text{else,} \end{array} \right.$$
is a minimizer of \eqref{full_model} and \eqref{problem0b}.
\end{theorem}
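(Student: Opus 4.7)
The plan is to reduce Theorem \ref{mainthm2} to Theorem \ref{mainthm1} by freezing the $v$-component at $\bar v$. Setting $g := G(\bar v) \in L^p(\Omega)$, I would first observe that, since $(\bar u, \bar v)$ minimizes $H$ over $BV(\Omega;[0,1]) \times {\cal V}$, in particular $\bar u$ must minimize the partial objective
\[
u \mapsto \int_\Omega g\, u\,\dd x + \beta\, J(u)
\]
over $u \in BV(\Omega;[0,1])$, for otherwise we could strictly decrease $H(\cdot,\bar v)$. Hence $\bar u$ is a minimizer of problem \eqref{problem1relaxed} with data $g = G(\bar v)$.

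Next I would invoke Theorem \ref{mainthm1} applied to this auxiliary problem. This yields that for almost every $t \in (0,1)$ the level-set indicator $u^t := \chi_{\{\bar u > t\}}$ is a minimizer of both \eqref{problem1} and \eqml{problem1relaxed} with the same data $g$. In particular, $u^t \in BV(\Omega;\{0,1\})$ and
\[
\int_\Omega G(\bar v)\, u^t\,\dd x + \beta\, J(u^t) \;=\; \int_\Omega G(\bar v)\, \bar u\,\dd x + \beta\, J(\bar u),
\]
so adding $F(\bar v)$ to both sides gives $H(u^t,\bar v) = H(\bar u,\bar v)$ for a.e.\ $t \in (0,1)$.

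It then remains to identify $(u^t, \bar v)$ as a minimizer of the full relaxed problem \eqref{problem0b} and, since $u^t$ is binary-valued, of the original problem \eqref{full_model} as well. Existence of a minimizer on both feasible sets is guaranteed by Corollary \ref{cor.fund}, and $(u^t, \bar v)$ achieves the optimal value on the larger relaxed set $BV(\Omega;[0,1]) \times {\cal V}$. Because the binary feasible set $BV(\Omega;\{0,1\}) \times {\cal V}$ is contained in the relaxed set and because $u^t$ lies in it, the pair $(u^t,\bar v)$ is \emph{a fortiori} a minimizer over $BV(\Omega;\{0,1\}) \times {\cal V}$.

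There is no serious analytical obstacle here, precisely because the coarea argument is encapsulated in Theorem \ref{mainthm1} and $v = \bar v$ remains fixed throughout: the second variable never interacts with the level-set decomposition and no semicontinuity statement for $F$ or continuity statement for $G$ needs to be revisited. The only point demanding a minor care is to verify that the inequality against an arbitrary competitor $v \in {\cal V}$ is automatic from the minimality of $(\bar u,\bar v)$ on the relaxed problem, combined with the equality $H(u^t,\bar v) = H(\bar u, \bar v)$ above; this is the argument I sketched in the preceding paragraph.
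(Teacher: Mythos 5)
Your proposal is correct and follows essentially the same route as the paper: freeze $v=\ol v$, identify the partial problem in $u$ with the auxiliary problem \eqref{problem1relaxed} for $g=G(\ol v)$, apply Theorem \ref{mainthm1} to get $H(u^t,\ol v)=H(\ol u,\ol v)$ for a.e.\ $t$, and conclude via the inclusion of the binary feasible set in the relaxed one. The appeal to Corollary \ref{cor.fund} is not needed, but it does no harm.
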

\begin{proof}
Since $(\ol u,\ol v) \in BV(\Omega;[0,1]) \times {\cal V}$ is a minimizer of \eqref{problem0b}, then in particular
$\ol u$ is a minimizer of 
$$  \min_{u \in BV(\Omega;[0,1]) }H(u,\ol v),$$
which is equivalent to the auxiliary problem \eqref{problem1relaxed} with $g:=G(\ol v)$. 
Hence, from Theorem \ref{mainthm1} we conclude that for almost every $t$ the indicator function $u^t$ is a minimizer, too.  From this, we infer
$$ H(u^t,\ol v) = H(\ol u, \ol v) =  \inf_{(u,v) \in BV(\Omega;[0,1]) \times {\cal V}} H(u,v)
\leq \inf_{(u,v) \in BV(\Omega;\{0,1\}) \times {\cal V}} H(u,v), $$
and, thus, $(u^t,\ol v)$ is a minimizer of \eqref{full_model} and \eqref{problem0b}.
\end{proof}

\subsection{Stability}

In the following we investigate the stability of \eqref{full_model}
with respect to perturbations of the operators. Since the stability with respect to 
$v$ can be obtained under standard conditions in Banach spaces \cite{funanalysis}, 
we restrict our attention to stability of the auxiliary problem \eqref{problem1} and its
relaxation \eqref{problem1relaxed} with respect to perturbations of $g$. For this purpose, we state an interesting 
preliminary result concerning the equivalence of  \eqref{problem1} to the optimization of an objective functional consisting of a quadratic term and the total variation functional.
\begin{lemma} \label{linearquadraticlemma}
For $g \in L^2(\Omega)$, \eqref{problem1} is equivalent to the regularized 
least-squares problem 
\begin{equation}
\min_{u \in BV(\Omega;\{0,1\})} \frac{1}{2} \int_\Omega (u+g- \frac{1}{2})^2\dd x +\beta~ J(u).
\label{problem1a}
\end{equation}
\end{lemma}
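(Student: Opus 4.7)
The plan is to exploit the algebraic identity $u^2 = u$ that holds pointwise almost everywhere whenever $u \in BV(\Omega;\{0,1\})$, and to show that the two objective functionals in \eqref{problem1} and \eqref{problem1a} differ only by an additive constant that is independent of $u$. Since $g\in L^2(\Omega)\subset L^1(\Omega)$ on a bounded domain, all integrals appearing below are finite, so there is no integrability obstruction.

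First I would expand the quadratic term pointwise:
\begin{equation*}
\tfrac{1}{2}\bigl(u+g-\tfrac{1}{2}\bigr)^2 = \tfrac{1}{2} u^2 + u\bigl(g-\tfrac{1}{2}\bigr) + \tfrac{1}{2}\bigl(g-\tfrac{1}{2}\bigr)^2.
\end{equation*}
Next, using $u(x)\in\{0,1\}$ a.e., I substitute $u^2 = u$ to obtain
\begin{equation*}
\tfrac{1}{2}\bigl(u+g-\tfrac{1}{2}\bigr)^2 = \tfrac{1}{2} u + u\bigl(g-\tfrac{1}{2}\bigr) + \tfrac{1}{2}\bigl(g-\tfrac{1}{2}\bigr)^2 = g\,u + \tfrac{1}{2}\bigl(g-\tfrac{1}{2}\bigr)^2,
\end{equation*}
since the terms $\tfrac{1}{2}u$ and $-\tfrac{1}{2}u$ cancel.

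Integrating over $\Omega$ and adding $\beta J(u)$ to both sides then yields
\begin{equation*}
\tfrac{1}{2}\int_\Omega\bigl(u+g-\tfrac{1}{2}\bigr)^2\,\dd x + \beta J(u) = \int_\Omega g\,u\,\dd x + \beta J(u) + C(g),
\end{equation*}
where $C(g):=\tfrac{1}{2}\int_\Omega(g-\tfrac{1}{2})^2\,\dd x$ is a finite constant depending only on $g$ (finite because $g\in L^2(\Omega)$ and $|\Omega|<\infty$). Since $C(g)$ is independent of $u$, the two objective functionals in \eqref{problem1} and \eqref{problem1a} coincide up to this constant on the feasible set $BV(\Omega;\{0,1\})$, and therefore share exactly the same set of minimizers. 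This establishes the equivalence.

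There is no serious obstacle here; the only subtlety to keep in mind is that the identity $u^2 = u$ is only valid because of the binary constraint, so the analogous equivalence does \emph{not} carry over verbatim to the relaxed feasible set $BV(\Omega;[0,1])$. The argument is purely pointwise/algebraic and needs no appeal to the coarea formula or BV-compactness results from the previous subsections.
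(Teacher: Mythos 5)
Your proof is correct and follows essentially the same route as the paper: both use the pointwise identity $u^2=u$ for binary-valued $u$ to show that the two objectives differ only by the constant $\tfrac{1}{2}\int_\Omega (g-\tfrac{1}{2})^2\,\dd x$, hence have the same minimizers. No issues.
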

\begin{proof}
Let $x\in\Omega$. For $u(x) \in \{0,1\}$ we have $u(x)^2=u(x)$ and thus 
$$u(x)^2 + 2 u(x) \left(g(x) - \frac{1}{2}\right) = u(x) g(x). $$
Since this equality holds almost everywhere in $\Omega$ we deduce
$$ \int_\Omega g~u~\dd x= \frac{1}{2} \int_\Omega (u+g- \frac{1}{2})^2\dd x 
- \frac{1}{2} \int_\Omega (g- \frac{1}{2})^2\dd x. $$
Hence, the objective functionals in \eqref{problem1} and \eqref{problem1a} only differ by
a constant term and consequently their minimizers are equal. 
\end{proof} 

Using the results of \cite{AcVo94} for regularized least-squares problems of the form
\eqref{problem1a} in $BV(\Omega)$ together with the closedness of 
$BV(\Omega;\{0,1\})$ with respect to strong $L^p$-convergence (cf. e.g.
\cite{DeZo01}), we can immediately deduce a stability result for the geometric problem.
\begin{proposition} \label{geometricstability}
Let $g^k \rightarrow g$ strongly in $L^2(\Omega)$ and let $u^k \in BV(\Omega;\{0,1\})$ 
denote a minimizer of \eqref{problem1} with $g$ replaced by $g^k$. Then, 
$(u^k)_k$ has a strongly converging subsequence in $L^p(\Omega)$ for each $p \in [1,\infty)$,
and each limit of a converging subsequence is a minimizer of \eqref{problem1}.
\end{proposition}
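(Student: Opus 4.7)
The plan is to use Lemma \ref{linearquadraticlemma} to reinterpret \eqref{problem1} as the least-squares problem \eqref{problem1a}, which places us exactly in the Acar--Vogel framework for $BV$-regularized least-squares, and then to combine the compactness of $BV$-bounded sets in $L^1$ with the closedness of $BV(\Omega;\{0,1\})$ under strong $L^p$-convergence to push the subsequential limit back into the binary class.

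First, I would extract a uniform $BV$-bound on $(u^k)_k$. Since each $u^k$ is a minimizer of \eqref{problem1} with $g$ replaced by $g^k$, testing against the admissible $u\equiv 0$ gives
\[
\int_\Omega g^k u^k\,\dd x + \beta J(u^k) \leq 0,
\]
so $\beta J(u^k) \leq \|g^k\|_{L^1(\Omega)} \leq C$, where $C$ is uniform in $k$ because $g^k \to g$ in $L^2(\Omega)$ and $\Omega$ is bounded. Together with $0 \leq u^k \leq 1$ a.e., which gives $\|u^k\|_{L^1(\Omega)} \leq |\Omega|$, this yields a uniform $\|u^k\|_{BV(\Omega)}$-bound.

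Second, I would invoke the compact embedding $BV(\Omega) \hookrightarrow L^1(\Omega)$ to extract a subsequence (not relabeled) converging to some $u^*$ strongly in $L^1(\Omega)$. Interpolating with the uniform bound $\|u^k\|_{L^\infty(\Omega)} \leq 1$ upgrades this to strong convergence in $L^p(\Omega)$ for every $p \in [1,\infty)$. The closedness result from \cite{DeZo01} applied to $L^1$-convergence then guarantees $u^* \in BV(\Omega;\{0,1\})$, so $u^*$ is admissible for \eqref{problem1}.

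Third, I would verify that $u^*$ is a minimizer of the limit problem. For any admissible $w \in BV(\Omega;\{0,1\})$, the minimality of $u^k$ for the perturbed problem gives
\[
\int_\Omega g^k u^k\,\dd x + \beta J(u^k) \leq \int_\Omega g^k w\,\dd x + \beta J(w).
\]
Since $g^k \to g$ in $L^2(\Omega)$ and $u^k \to u^*$ in $L^2(\Omega)$, the product terms converge: $\int g^k u^k \to \int g u^*$ and $\int g^k w \to \int g w$. Combining this with the $L^1$-lower semicontinuity of $J$ (which is the same ingredient invoked in Proposition \ref{prop2.1}), passing to $\liminf$ on the left yields
\[
\int_\Omega g u^*\,\dd x + \beta J(u^*) \leq \int_\Omega g w\,\dd x + \beta J(w),
\]
so $u^*$ minimizes \eqref{problem1}. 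The main technical point is the identification of the limit as an element of $BV(\Omega;\{0,1\})$, which is what makes the citation to \cite{DeZo01} essential; once this closedness is in hand, the rest is a standard direct-method argument that exactly matches the Acar--Vogel stability analysis used for \eqref{problem1a}.
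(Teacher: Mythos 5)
Your proof is correct, but it is in substance a different (and more self-contained) route than the one the paper takes. The paper does not argue directly at all: it converts \eqref{problem1} into the regularized least-squares problem \eqref{problem1a} via Lemma \ref{linearquadraticlemma} and then simply invokes the stability theory of \cite{AcVo94} for $BV$-regularized least-squares together with the closedness of $BV(\Omega;\{0,1\})$ under strong $L^p$-convergence from \cite{DeZo01}. You announce the same plan in your opening paragraph, but what you actually execute never uses the quadratic reformulation or the Acar--Vogel theorem: you run the direct method on the linear objective itself --- coercivity of $J$ on the minimizing family by testing against $u\equiv 0$, the uniform $L^\infty$-bound, compactness of $BV(\Omega)\hookrightarrow L^1(\Omega)$ (legitimate since $\Omega$ is bounded and piecewise smooth), interpolation to get $L^p$-convergence for all $p<\infty$, closedness of the binary class to keep the limit admissible, and then $L^1$-lower semicontinuity of $J$ plus strong $L^2\times L^2$ convergence of the coupling term to pass to the limit in the minimality inequality. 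All of these steps are sound. What your version buys is that it handles the \emph{constrained} problem over $BV(\Omega;\{0,1\})$ head-on, whereas a literal citation of \cite{AcVo94} concerns minimizers over all of $BV(\Omega)$ and needs the closedness result to be woven in; the price is only that your argument is longer than the paper's one-line deduction, and that the reference to Lemma \ref{linearquadraticlemma} in your plan is ultimately decorative and could be dropped.
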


As a direct consequence of Proposition \ref{geometricstability} and Theorem \ref{mainthm1}, 
we obtain the following stability result for the level sets of minimizers of the perturbed relaxed problem.
\begin{corollary} \label{stabilityresult}
Let $g^k \rightarrow g$ strongly in $L^2(\Omega)$ and let $u^k \in BV(\Omega;[0,1])$ 
denote a minimizer of \eqref{problem1relaxed} with $g$ replaced by $g^k$. Then, for almost 
every $t \in (0,1)$, the indicator function of $\{u^k > t\}$ has a strongly converging 
subsequence in $L^p(\Omega)$ for each $p \in [1,\infty)$,
and each limit of a converging subsequence is a minimizer of \eqref{problem1}.
\end{corollary}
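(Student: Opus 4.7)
The plan is to combine Theorem \ref{mainthm1} with Proposition \ref{geometricstability} in a direct way, with the only non-obvious step being a careful handling of the exceptional null set of thresholds which a priori depends on the index $k$.

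First, I would apply Theorem \ref{mainthm1} to each perturbed relaxed problem (with data $g^k$ in place of $g$): for every $k\in\mathbb{N}$ there is a Lebesgue null set $N_k\subset(0,1)$ such that, for all $t\in(0,1)\setminus N_k$, the indicator function $(u^k)^t:=\chi_{\{u^k>t\}}$ is a minimizer of the binary problem \eqref{problem1} with $g$ replaced by $g^k$. Setting $N:=\bigcup_{k\in\mathbb{N}} N_k$ gives a Lebesgue null subset of $(0,1)$, so for a.e. $t\in(0,1)$ the sequence $((u^k)^t)_{k}$ consists simultaneously, for every $k$, of minimizers of the binary problem with data $g^k$.

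Once this is established, I would fix such a $t$ and invoke Proposition \ref{geometricstability} directly: since $g^k\to g$ strongly in $L^2(\Omega)$, that proposition guarantees that $((u^k)^t)_k\subset BV(\Omega;\{0,1\})$ admits a subsequence converging strongly in $L^p(\Omega)$ for every $p\in[1,\infty)$, and that every such limit is a minimizer of the unperturbed problem \eqref{problem1}. This is exactly the conclusion of the corollary.

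The only potential obstacle is the threshold $t$ itself: Theorem \ref{mainthm1} produces the "good" thresholds only up to a null set that depends on $k$, so one might fear that no single $t$ works for all $k$ simultaneously. The countable union argument above removes this concern, since a countable union of Lebesgue null sets is Lebesgue null. All other ingredients, namely the lower semicontinuity of $J$, the closedness of $BV(\Omega;\{0,1\})$ under strong $L^p$-convergence, and the equivalence with the least-squares formulation \eqref{problem1a}, have already been absorbed into Lemma \ref{linearquadraticlemma} and Proposition \ref{geometricstability}, so no additional work is required.
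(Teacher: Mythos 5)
Your argument is correct and is exactly the route the paper takes: the corollary is stated there as a direct consequence of Theorem \ref{mainthm1} (applied to each perturbed relaxed problem) combined with Proposition \ref{geometricstability}. Your explicit countable-union-of-null-sets step, ensuring a single threshold $t$ works for all $k$ simultaneously, is the only detail the paper leaves implicit, and you handle it correctly.
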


\subsection{Quadratic penalization -- uniqueness}

Since the objective functionals in \eqref{full_model} and \eqref{problem1} are not strictly convex
with respect to $u$, as stated before we can expect uniqueness of solutions neither for these problems nor for their
relaxations. 

In order to obtain an approximate problem with a unique solution, it seems natural to add a
quadratic regularization term with a small parameter $\epsilon>0$. For this reason, we consider 
\begin{equation}
\min_{u \in BV(\Omega;[0,1])} \int_\Omega g~u~\dd x + \beta~J(u) + \frac{\epsilon}2 \int_\Omega (u - \hat u)^2 \dd x 
\label{problem1relaxeda}
\end{equation}
with a given $\hat{u}\in BV(\Omega;[0,1])$. We have the following existence and uniqueness result.
\begin{proposition}
For $\epsilon > 0$ the optimization problem \eqref{problem1relaxeda} has a unique solution $u^\epsilon\in BV(\Omega;[0,1])$. 
\end{proposition}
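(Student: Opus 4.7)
My plan is to establish existence via the direct method of the calculus of variations and then obtain uniqueness from the strict convexity introduced by the quadratic penalty term.

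For existence, I would first observe that the objective is bounded below: the linear term satisfies $\int_\Omega g u\,\dd x \geq -\|g\|_{L^1}$ because $0 \leq u \leq 1$ a.e.\ in $\Omega$, while $J(u) \geq 0$ and the quadratic term is nonnegative. Consequently, along any minimizing sequence $\{u^k\} \subset BV(\Omega;[0,1])$ the total variation $J(u^k)$ is uniformly bounded. Combined with $\|u^k\|_{L^\infty} \leq 1$ and the fact that $\Omega$ is bounded, this yields a uniform bound on $\|u^k\|_{BV}$, exactly as in Proposition \ref{prop2.1}. By the weak-* compactness of bounded sets in $BV(\Omega)$ (and in $L^\infty(\Omega)$), a subsequence, still denoted $\{u^k\}$, converges weak-* in $BV$ and in $L^\infty$ to some $u^\epsilon \in BV(\Omega;[0,1])$; the pointwise bounds $0 \leq u^\epsilon \leq 1$ are preserved by weak-* convergence in $L^\infty$.

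For the lower semicontinuity of the objective, the linear term passes to the limit continuously by duality $L^p$/$L^q$ (choose $q=p/(p-1)$ and use weak-* convergence in $L^q$ since $u^k$ is uniformly $L^\infty$-bounded), and $J$ is weakly-* lower semicontinuous on $BV(\Omega)$ by \cite[Thm.~2.4]{MR1079985}. For the quadratic term I would invoke the compact embedding $BV(\Omega) \hookrightarrow L^1(\Omega)$ to deduce $u^k \to u^\epsilon$ strongly in $L^1$, and then, together with the uniform $L^\infty$-bound and dominated convergence, conclude strong $L^2$-convergence; hence $\int_\Omega (u^k - \hat u)^2 \dd x \to \int_\Omega (u^\epsilon - \hat u)^2\dd x$. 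This yields $u^\epsilon$ as a minimizer.

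Uniqueness is the easier step, but it is the crux of the proposition. The functional $u \mapsto \tfrac{\epsilon}{2}\int_\Omega (u-\hat u)^2\dd x$ is strictly convex on $L^2(\Omega)$, and since $BV(\Omega;[0,1]) \subset L^\infty(\Omega) \subset L^2(\Omega)$ (as $\Omega$ is bounded), this strict convexity is inherited on $BV(\Omega;[0,1])$. The remaining parts of the objective, namely the linear term $\int_\Omega g u\,\dd x$ and $\beta J(u)$, are convex, and the admissible set $BV(\Omega;[0,1])$ is convex. Hence the entire functional is strictly convex on a convex set, so two minimizers $u_1 \neq u_2$ would yield $\tfrac{1}{2}(u_1+u_2)$ with a strictly smaller value, a contradiction.

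The main obstacle I anticipate is the passage to the limit in the quadratic term under weak-* $BV$ convergence; handling it cleanly requires combining the compact $BV \hookrightarrow L^1$ embedding with the uniform $L^\infty$-bound to upgrade to $L^2$-convergence. Everything else is either a direct application of results already used earlier in the excerpt or an immediate strict convexity argument.
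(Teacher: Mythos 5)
Your proposal is correct and follows essentially the same route as the paper, which simply notes that existence follows by the same arguments as for \eqref{problem1relaxed} and that uniqueness follows from the strict convexity of the quadratic term added to the convex remainder; you merely spell out the direct-method details (coercivity of the $BV$-seminorm on sublevel sets, weak-* compactness, lower semicontinuity of each term) that the paper leaves implicit.
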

\begin{proof}
First of all, existence of a minimizer can be concluded by similar arguments as for
\eqref{problem1relaxed}. Since the first two terms of the objective functional are convex, and
the additional quadratic term is strictly convex, we conclude uniqueness of the minimizer.
\end{proof}

Due to the additional quadratic term, the relaxation property of Theorem \ref{mainthm1} does
not hold for \eqref{problem1relaxeda}. Consequently, the minimum in $BV(\Omega;\{0,1\})$ might be
strictly greater than the one in $BV(\Omega;[0,1])$. However, we are only interested in \eqref{problem1relaxeda}
as an approximation to problem \eqref{problem1} and therefore study the convergence of 
level sets as $\epsilon \rightarrow 0$.

\begin{theorem}
For every sequence $\epsilon_k \rightarrow 0$ there exists a subsequence (again denoted by
$(\epsilon_k)_k$) such that for almost every $t \in (0,1)$, the sequence $(u^{\epsilon_k,t})_k$, with $u^{\epsilon_k,t}$ defined as the indicator function of the level set $\{ u^{\epsilon_k} > t \}$,
converges to a solution of problem \eqref{problem1} in the strong topology of $L^p(\Omega)$ for 
each $p \in [1,\infty)$. Moreover, if for any sequence $\epsilon_k \rightarrow 0$, the sequence 
$u^{\epsilon_k,t}$ converges, then the limit is a solution of problem \eqref{problem1relaxed} .
\end{theorem}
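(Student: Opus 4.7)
The plan is to combine BV-compactness with the exact relaxation of Theorem \ref{mainthm1}. As $\epsilon_k\to 0$, subsequences of the penalized minimizers $u^{\epsilon_k}$ should cluster around minimizers of the unperturbed problem \eqref{problem1relaxed}; Theorem \ref{mainthm1} then converts these cluster points to binary minimizers by thresholding, and convergence transfers from $u^{\epsilon_k}$ to its level sets by a standard pointwise argument on characteristic functions.

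First I would establish a uniform BV-bound. Let $\bar u$ be any minimizer of \eqref{problem1relaxed}; optimality of $u^{\epsilon_k}$ for the penalized problem yields
\[
\int_\Omega g\,u^{\epsilon_k}\,\dd x + \beta J(u^{\epsilon_k}) + \frac{\epsilon_k}{2}\|u^{\epsilon_k}-\hat u\|_{L^2(\Omega)}^2 \leq \int_\Omega g\,\bar u\,\dd x + \beta J(\bar u) + \frac{\epsilon_k}{2}\|\bar u-\hat u\|_{L^2(\Omega)}^2,
\]
which together with $0\leq u^{\epsilon_k}\leq 1$ on the bounded $\Omega$ gives $\sup_k\|u^{\epsilon_k}\|_{BV(\Omega)}<\infty$. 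The compact embedding $BV(\Omega)\hookrightarrow L^1(\Omega)$, combined with the $L^\infty$-bound and dominated convergence, produces along a subsequence (not relabeled) $u^{\epsilon_k}\to u^\infty$ in $L^p(\Omega)$ for every $p\in[1,\infty)$, and a further subsequence provides pointwise convergence almost everywhere in $\Omega$. Taking $\liminf$ in the inequality above, invoking the $L^1$-lower semicontinuity of $J$ and $\epsilon_k\to 0$, identifies $u^\infty$ as a minimizer of \eqref{problem1relaxed}.

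Applying Theorem \ref{mainthm1} to $u^\infty$ gives, for a.e.\ $t\in(0,1)$, that $(u^\infty)^t=\chi_{\{u^\infty>t\}}$ is a minimizer of \eqref{problem1}. Simultaneously, because $u^\infty\in L^\infty(\Omega)$ has at most countably many levels $t$ with $|\{u^\infty=t\}|>0$, for a.e.\ $t$ the pointwise convergence $u^{\epsilon_k}(x)\to u^\infty(x)$ forces $\chi_{\{u^{\epsilon_k}>t\}}(x)\to\chi_{\{u^\infty>t\}}(x)$ almost everywhere; dominated convergence (integrand bounded by $1$, finite $|\Omega|$) upgrades this to strong $L^p$-convergence for all $p\in[1,\infty)$. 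Intersecting the two full-measure sets of admissible $t$ yields the first claim. For the second claim, suppose $u^{\epsilon_k,t}$ converges in $L^1$ at a specific $t$ along the full sequence. Applying the extraction above to any subsequence produces some cluster point $u^\infty$, and for admissible $t$ this forces $u^{\epsilon_k,t}\to(u^\infty)^t$; since the full sequence converges, every subsequential limit agrees, so the limit equals $(u^\infty)^t$, a minimizer of \eqref{problem1} and, because the infima of \eqref{problem1} and \eqref{problem1relaxed} coincide by Theorem \ref{mainthm1}, also of \eqref{problem1relaxed}.

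The main technical obstacle is the joint bookkeeping of two ``almost every $t$'' exceptional sets: one from Theorem \ref{mainthm1} granting minimality of $(u^\infty)^t$, and one from the measure-theoretic condition $|\{u^\infty=t\}|=0$ needed for convergence of the characteristic functions. For the first statement both are full-measure and their intersection suffices. In the second statement the specific $t$ must additionally lie in this intersection for every subsequential $u^\infty$, which is the implicit restriction carried by the ``almost every $t$'' formulation and the reason the conclusion is stated in subsequential form.
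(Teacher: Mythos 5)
Your argument is correct, but it proceeds along a genuinely different route than the paper. The paper never works with compactness of the sequence $(u^{\epsilon_k})_k$ itself: from the same energy comparison you wrote down it only extracts the uniform bound on $\norm{u^\epsilon-\hat u}_{L^2}$, then observes (via the first-order optimality condition of the convex problem \eqref{problem1relaxeda}) that $u^\epsilon$ also minimizes the linear relaxed problem \eqref{problem1relaxed} with perturbed datum $g^\epsilon=g+\epsilon(u^\epsilon-\hat u)$, so that $\norm{g^\epsilon-g}_{L^2}={\cal O}(\epsilon)$ and the conclusion follows at once from the stability result (Corollary \ref{stabilityresult}, which rests on Lemma \ref{linearquadraticlemma}, the Acar--Vogel stability theory for the least-squares reformulation, and Theorem \ref{mainthm1}). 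You instead argue directly: uniform $BV$ bound, compact embedding $BV(\Omega)\embedded\embedded L^1(\Omega)$ plus the $[0,1]$ bound to get $L^p$ and a.e.\ convergence to some $u^\infty$, identification of $u^\infty$ as a minimizer of \eqref{problem1relaxed} by lower semicontinuity, Theorem \ref{mainthm1} to threshold $u^\infty$, and the elementary observation that a.e.\ convergence of $u^{\epsilon_k}$ forces a.e.\ (hence, by dominated convergence, $L^p$) convergence of $\chi_{\{u^{\epsilon_k}>t\}}$ to $\chi_{\{u^\infty>t\}}$ for every $t$ with $|\{u^\infty=t\}|=0$, i.e.\ all but countably many $t$. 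What the paper's route buys is reuse of already established stability machinery and no hand-made level-set convergence argument; what your route buys is a self-contained, more elementary proof that in addition identifies the limit explicitly as $(u^\infty)^t$ for a relaxed minimizer $u^\infty$, at the price of the explicit a.e.-$t$ bookkeeping. One small remark on your closing paragraph: for the second assertion it is not necessary (and, with possibly uncountably many subsequential limits, not advisable) to require $t$ to be admissible for \emph{every} cluster point; fixing a single subsequence and its limit $u^\infty$ already yields a full-measure set of $t$ on which the implication ``full-sequence convergence $\Rightarrow$ limit $=(u^\infty)^t$ is a minimizer'' holds, which is all the statement needs.
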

\begin{proof}
Let $u^o$ be a minimizer of \eqref{problem1relaxed}. Based on the definition of $u^\epsilon$ and $u^o$, we conclude 
\begin{eqnarray*}
\int_\Omega g~u^\epsilon~\dd x + \beta~J(u^\epsilon) + \frac{\epsilon}2 \int_\Omega (u^\epsilon - \hat u)^2 \dd x &\leq& 
\int_\Omega g~u^o~\dd x + \beta~J(u^o) + \frac{\epsilon}2 \int_\Omega (u^o - \hat u)^2 \dd x \\
&\leq& \int_\Omega g~u^\epsilon~\dd x +\beta~ J(u^\epsilon) + \frac{\epsilon}2 \int_\Omega (u^o - \hat u)^2 \dd x ,
\end{eqnarray*}
and thus, 
$$ \int_\Omega (u^\epsilon - \hat u)^2 \dd x \leq \int_\Omega (u^o - \hat u)^2 \dd x ,$$
which implies in particular that $\norm{u^\epsilon - \hat u}_{L^2}$ is uniformly bounded in $\epsilon$.
Let $g^\epsilon:=g + \epsilon(u^\epsilon - \hat u)$, then $u^\epsilon$ is also a minimizer of
$$\min_{u \in BV(\Omega)}  \int_\Omega g^\epsilon~u~\dd x +\beta~ J(u)\quad\text{ subject to (s.t)}\quad u(x)\in[0,1],$$
which can be seen from the associated necessary and sufficient optimality condition.  Since 
$$ \norm{g^\epsilon - g}_{L^2} = \epsilon \norm{u^\epsilon - \hat u}_{L^2} = {\cal O}(\epsilon), $$
we can employ Corollary \ref{stabilityresult} to conclude the assertion. 
\end{proof}

%We finally mention that a similar analysis is possible for a quadratic approximation of the form
%\begin{eqnarray}
%\int_\Omega g~u~\dd x + J(u) + \frac{\epsilon}2 \int_\Omega \abs{\nabla(u - \hat u)}^2 \dd x &\rightarrow& 
%\min_{u \in BV(\Omega;[0,1])} ,
%\label{problem1relaxedb}
%\end{eqnarray}
%using the function $g^\epsilon:=g - \epsilon \Delta (u^\epsilon - \hat u)$ for the construction 
%of an approximate problem. 

\subsection{Volume constraints} 
In many applications, volume constraints on $u$ are relevant. A particular instance occurs in topology optimization, where such a constraint fixes the amount of material available for designing an optimal topological structure; see section~\ref{ssec:mincompliance} below for an application in minimal compliance topology optimization. 

In view of this discussion, the model \eqref{full_model} is augmented by adding the volume constraint
\begin{equation}
\int_\Omega u~\dd x = V,
\label{volumeconstraint}
\end{equation}
with fixed $0  < V < \abs{\Omega}$.

We observe that the range space of this constraint is finite (actually one-) dimensional. As the following example highlights, constraints in function space could be problematic.

\begin{example}
Let ${\cal V} = L^2(\Omega)$, $F(v) = \int_\Omega \abs{v-\frac{1}2}^2~dx$, 
and $G(v) \equiv 0$. We consider the relaxed problem \eqref{problem0b} subject to the constraint
$v=u$ in $L^2(\Omega)$. Obviously, the objective functional vanishes for $v = u \equiv \frac{1}2$, 
and it is positive for any other $(u,v)  \in BV(\Omega;[0,1]) \times L^2(\Omega)$. 
Thus, $v = u \equiv \frac{1}2$ is the unique minimizer, and in particular, the infimum over 
$BV(\Omega;\{0,1\})$ is strictly larger than $0$. 
\end{example}

Possibly the main reason for the fact that the ''exact'' relaxation result does not hold for general constraints is a potential
nonexistence of Lagrange multipliers for the binary-valued problem. In the case of the volume constraint
\eqref{volumeconstraint}, however, we can verify the existence of a (scalar) Lagrange multiplier
and use it to prove an equivalence result for the relaxation in this case. We start again with the minimization
of the auxiliary problem from above, but now subject to \eqref{volumeconstraint}.
\begin{theorem} \label{volumethm1}
Let $u \in BV(\Omega;[0,1])$ be a minimizer of \eqref{problem1relaxed} subject to 
\eqref{volumeconstraint}. Then, for 
almost every $t \in (0,1)$, the function $u^t \in BV(\Omega;\{0,1\})$ defined as 
the indicator function of the level set $\{ u > t\}$, i.e.,
$$ u^t(x) = \left\{ \begin{array}{ll} 1, & \text{if } u > t, \\ 0, &\text{else,} \end{array} \right. $$
is a minimizer of \eqref{problem1} and \eqref{problem1relaxed} subject to 
\eqref{volumeconstraint}, respectively.
\end{theorem}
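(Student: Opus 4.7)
The strategy is to mirror the proof of Theorem \ref{mainthm1} but first absorb the scalar volume constraint \eqref{volumeconstraint} into the linear term via Lagrange duality. Unlike the unconstrained setting, the naive coarea comparison fails because an individual level set $\{u > t\}$ need not have measure $V$; the layer-cake identity only enforces $V = \int_0^1 |\{u > t\}|\,\dd t$ on average, so that level sets of a constrained minimizer are not automatically feasible for \eqref{volumeconstraint}.

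As a first step I would establish the existence of a Lagrange multiplier $\lambda \in \R$ for the scalar equality constraint $\int_\Omega u\,\dd x = V$. Since \eqref{problem1relaxed} is convex, the constraint is linear, and the constant function $u \equiv V/|\Omega|$ is a strictly interior feasible point for the box constraint $u \in [0,1]$ (because $0 < V < |\Omega|$), a Slater-type condition combined with standard convex duality yields $\lambda$ such that any constrained minimizer $u$ also minimizes the unconstrained Lagrangian
$$L_\lambda(w) := \int_\Omega (g+\lambda)\,w\,\dd x + \beta\, J(w)$$
over $w \in BV(\Omega;[0,1])$. Applying Theorem \ref{mainthm1} with $g$ replaced by the shifted weight $g+\lambda$ then yields, for almost every $t \in (0,1)$, that the indicator $u^t$ of $\{u > t\}$ is a minimizer of $L_\lambda$ over both $BV(\Omega;[0,1])$ and $BV(\Omega;\{0,1\})$, and in particular $L_\lambda(u^t) = L_\lambda(u)$ for a.e. $t$.

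To close the argument I would need the almost-everywhere feasibility $\int_\Omega u^t\,\dd x = V$. Expanding $L_\lambda(u^t) = L_\lambda(u)$ and using $\int_\Omega u\,\dd x = V$ gives
$$\int_\Omega g\,u^t\,\dd x + \beta\, J(u^t) = \int_\Omega g\,u\,\dd x + \beta\, J(u) + \lambda\Bigl(V - \int_\Omega u^t\,\dd x\Bigr),$$
so once feasibility is in hand the original objective of $u^t$ matches that of $u$, and since $u^t \in BV(\Omega;\{0,1\})$, the minimization properties for \eqref{problem1} and \eqref{problem1relaxed} subject to \eqref{volumeconstraint} follow simultaneously. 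The a.e.\ feasibility is the main obstacle: I expect to derive it from the layer-cake identity $V = \int_0^1 |\{u > t\}|\,\dd t$ combined with the fact that the whole family $\{u^t\}_{t\in(0,1)}$ consists of Lagrangian minimizers, via a contradiction argument in which a convex mixture of level sets of differing volumes would produce a feasible competitor to $u$ with strictly smaller value of $\int g\,u\,\dd x + \beta\, J(u)$. It is this step -- propagating the a.e.\ level-set optimality for $L_\lambda$ into the rigid scalar constraint -- rather than the existence of $\lambda$, that I expect to be the delicate part of the proof.
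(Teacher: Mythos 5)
Your first half runs parallel to the paper's proof but reaches the multiplier by a different route: you invoke abstract convex duality (subdifferentiability of the perturbed value function at the interior volume $V$), whereas the paper constructs $\lambda$ by hand, showing that $\lambda\mapsto\int_\Omega u^\lambda\,\dd x$, with $u^\lambda$ a \emph{binary} minimizer of the shifted problem \eqref{volumemodel}, is continuous and non-increasing and then applying the intermediate value theorem. The constructive route buys something your abstract one does not: it produces a binary Lagrangian minimizer whose volume is exactly $V$, which is what the paper uses to show that the constrained relaxed minimum is attained in $BV(\Omega;\{0,1\})$ and that every constrained minimizer of \eqref{problem1relaxed} subject to \eqref{volumeconstraint} also minimizes \eqref{volumemodel}. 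From that point on (applying Theorem \ref{mainthm1} to the weight $g+\lambda$) your argument and the paper's coincide.

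The genuine gap is the step you yourself flag: the a.e.\ feasibility $\int_\Omega u^t\,\dd x=V$, and the mechanism you propose for it cannot work. A convex mixture $\theta u^{t_1}+(1-\theta)u^{t_2}$ of two Lagrangian minimizers with volumes bracketing $V$ is again a Lagrangian minimizer, and after subtracting $\lambda V$ it has exactly the \emph{same} constrained objective value as $u$, not a strictly smaller one, so the intended contradiction never materializes. Worse, a.e.\ feasibility is not a consequence of Lagrangian optimality plus the layer-cake identity at all: if $g+\lambda$ vanishes on a subregion, a constrained minimizer may take an intermediate constant value there. For instance, on $\Omega=(0,3)$ with $g=-1$ on $(0,1)$, $g=0$ on $(1,2)$, $g=1$ on $(2,3)$, $V=\tfrac32$ and $0<\beta<1$, the function $\ol u=\chi_{(0,1)}+\tfrac12\chi_{(1,2)}$ is a minimizer of \eqref{problem1relaxed} subject to \eqref{volumeconstraint} (value $-1+\beta$, same as the binary minimizer $\chi_{(0,3/2)}$), yet its level sets have volumes $2$ and $1$ only, so no thresholding of this particular minimizer is feasible. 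Hence your closing strategy would have to be replaced by something else --- e.g.\ restricting attention to minimizers arising from the paper's construction, or a nondegeneracy condition on $\{g+\lambda=0\}$. It is only fair to add that the paper's own proof is briefest exactly here: after showing the level sets of $\ol u$ minimize \eqref{volumemodel} it concludes ``by the above argument'' that they solve the constrained problems, which again tacitly presupposes $\int_\Omega u^t\,\dd x=V$. So you have correctly located the crux and honestly left it open, but the argument you sketch to close it would fail.
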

\begin{proof}
In the following let $u^\lambda$ (for $\lambda \in \R$) denote a minimizer of the problem 
\begin{equation}\label{volumemodel}
\min_{u \in BV(\Omega;[0,1])} \int_\Omega (g+\lambda)~u~\dd x + \beta~J(u).
\end{equation}
Existence of $u^\lambda$ follows from Proposition~\ref{prop2.1}.
Moreover, we can apply the above relaxation results and therefore we may assume that 
$u^\lambda \in BV(\Omega;\{0,1\})$. By comparing objective functionals, it is easy to show that we have for $\epsilon > 0$
$$ \int_\Omega u^{\lambda+\epsilon}~\dd x \leq \int_\Omega u^{\lambda}~\dd x. $$
Further, using the weak-* compactness in $L^\infty(\Omega) \cap BV(\Omega;\{0,1\})$ we can prove for 
$\mu_k \rightarrow \lambda$ that a subsequence $(u^{\mu_k})_k$ converges to $u^\lambda$ as $k\to+\infty$ in the weak-* topology of $L^\infty(\Omega)$ 
and, thus, in particular it holds that
$$  \int_\Omega u^{\mu_k}~\dd x \rightarrow \int_\Omega u^{\lambda}~\dd x. $$
Hence, we conclude by standard arguments for real functions that the map
$$ W:\R \rightarrow \R, \quad \lambda \mapsto \int_\Omega u^\lambda~\dd x$$
is well-defined, continuous and non-increasing. As $\lambda \rightarrow \infty$ we must have 
$\int_\Omega u^\lambda~\dd x \rightarrow 0$ due to uniform boundedness of the objective functional, and
for $\lambda \rightarrow - \infty$ we obtain $\int_\Omega u^\lambda~\dd x \rightarrow \abs{\Omega}$.
Since $W$ is a continuous function on $\R$, we conclude that for each $V \in (0,\abs{\Omega})$ there exists
$\lambda \in \R$ such that $W(\lambda) = V$. Moreover, for all $u \in BV(\Omega;[0,1])$ with 
$\int_\Omega u~\dd x= V$ we have from the definition of $u^\lambda$
\begin{eqnarray*}
\int_\Omega g~u^\lambda~\dd x  + \beta~J(u^\lambda) &=& 
\int_\Omega (g+\lambda)~u^\lambda~\dd x + \beta~J(u^\lambda) - \lambda V \\
&\leq& \int_\Omega (g+\lambda)~u~\dd x + \beta~J(u) - \lambda V \\
 &=& \int_\Omega g~u~\dd x +\beta~ J(u).
 \end{eqnarray*}
 Thus, $u^\lambda$ is a solution of the relaxed problem \eqref{problem1relaxed} subject to 
\eqref{volumeconstraint}. 

Now take any solution $\ol u \in BV(\Omega;[0,1])$ of \eqref{problem1relaxed} subject to 
\eqref{volumeconstraint}. Then again it holds that
$$ \int_\Omega (g+\lambda)~ u^\lambda~\dd x + \beta~J(u^\lambda) = \int_\Omega (g+\lambda)~\ol u~\dd x + \beta~J(\ol u)$$
and hence, $\ol u$ is also a solution of \eqref{volumemodel}.
%$$\int_\Omega (g+\lambda)~u~\dd x + J(u) \rightarrow \min_{u \in BV(\Omega;[0,1])} .$$
Now we can apply Theorem \ref{mainthm1} to this problem and show that the indicator functions $u^t$ 
are solutions, too, for almost every $t$. Then, by the above argument, $u^t$ is also a solution of 
\eqref{problem1relaxed} subject to \eqref{volumeconstraint}.
\end{proof}

By reasoning analogous to the one in the proof of Theorem \ref{mainthm2} one shows the following result.
\begin{theorem} 
Let the assumptions of Theorem \ref{mainthm2} be fulfilled and let 
$(\ol u, \ol v) \in BV(\Omega;[0,1]) \times {\cal V}$ be a minimizer of 
\eqref{problem0b} subject to \eqref{volumeconstraint}.
Then, for almost every $t \in (0,1)$, the pair $(u^t,\ol v) \in BV(\Omega;\{0,1\}) \times {\cal V}$, 
where $u^t$ is
the indicator function of the level set $\{ \ol u > t\}$, i.e.,
$$ u^t(x) = \left\{ \begin{array}{ll} 1, & \text{if } \ol u > t, \\ 0, &\text{else,} \end{array} \right.$$
is a minimizer of \eqref{full_model} and \eqref{problem0b} subject to \eqref{volumeconstraint}, respectively.
\end{theorem}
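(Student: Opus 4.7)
The plan is to mirror the proof of Theorem~\ref{mainthm2} almost verbatim, with the single substitution of Theorem~\ref{volumethm1} (volume-constrained auxiliary problem) in place of Theorem~\ref{mainthm1}. First I would fix the component $\bar v$ of the minimizer and observe that $\bar u$ necessarily solves the reduced problem
$$ \min_{u\in BV(\Omega;[0,1])} \int_\Omega G(\bar v)\,u~\dd x + \beta\,J(u) \quad\text{s.t.}\quad \int_\Omega u~\dd x = V, $$
because for any competitor $u$ in this subproblem the pair $(u,\bar v)$ is admissible for \eqref{problem0b} under \eqref{volumeconstraint}, and $H(u,\bar v)$ differs from the reduced objective only by the fixed constant $F(\bar v)$. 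Setting $g:=G(\bar v)\in L^p(\Omega)$, which is an admissible datum by the mapping property of $G$, this is exactly \eqref{problem1relaxed} subject to \eqref{volumeconstraint}.

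Next I would apply Theorem~\ref{volumethm1} to $\bar u$ with datum $g=G(\bar v)$: for almost every $t\in(0,1)$ the indicator function $u^t$ of $\{\bar u>t\}$ lies in $BV(\Omega;\{0,1\})$, satisfies $\int_\Omega u^t~\dd x = V$, and is a minimizer of both the binary and the relaxed volume-constrained auxiliary problem. In particular
$$ \int_\Omega G(\bar v)\,u^t~\dd x + \beta\,J(u^t) = \int_\Omega G(\bar v)\,\bar u~\dd x + \beta\,J(\bar u). $$
Adding $F(\bar v)$ yields $H(u^t,\bar v)=H(\bar u,\bar v)$. The sandwich
$$ H(u^t,\bar v) = H(\bar u,\bar v) = \inf_{\substack{(u,v)\in BV(\Omega;[0,1])\times\mathcal{V}\\ \int_\Omega u~\dd x = V}} H(u,v) \;\leq\; \inf_{\substack{(u,v)\in BV(\Omega;\{0,1\})\times\mathcal{V}\\ \int_\Omega u~\dd x = V}} H(u,v), $$
combined with the admissibility of $(u^t,\bar v)$ for the right-hand infimum, then forces equality throughout and identifies $(u^t,\bar v)$ as a joint minimizer of \eqref{full_model} and \eqref{problem0b} subject to \eqref{volumeconstraint}.

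There is essentially no genuine obstacle once Theorem~\ref{volumethm1} is in hand. The only point that could conceivably go wrong when passing from $\bar u$ to its super-level sets is the preservation of the volume constraint for almost every $t$, and that issue has already been absorbed into the proof of Theorem~\ref{volumethm1} via the Lagrange-multiplier construction (the non-increasing, continuous volume function $W$ intersecting the value $V$). No fresh compactness or lower-semicontinuity argument is required, since $\bar v$ is held fixed throughout and all $v$-dependence enters only through the constant $F(\bar v)$ and the frozen $L^p$-datum $G(\bar v)$.
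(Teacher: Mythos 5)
Your proposal is correct and follows exactly the route the paper intends: freeze $\ol v$, note that $\ol u$ solves the volume-constrained auxiliary problem \eqref{problem1relaxed}--\eqref{volumeconstraint} with $g:=G(\ol v)$, invoke Theorem~\ref{volumethm1}, and conclude with the same infimum comparison as in Theorem~\ref{mainthm2}. The paper gives no separate proof (it only states that the argument of Theorem~\ref{mainthm2} carries over), so your write-up is precisely the intended argument, including the correct observation that feasibility of $u^t$ for the volume constraint is part of the conclusion of Theorem~\ref{volumethm1}.
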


\section{The Fenchel dual problem and dual semismooth Newton}

This section is devoted to the study of an efficient algorithm for solving the convex relaxed auxiliary problem \eqref{problem1relaxed} with $g\in L^p(\Omega)$ for some $p>1$. 

In order to remove the box constraint in \eqref{problem1relaxed}, we add a penalty term to obtain an unconstrained minimization problem of the form
\begin{equation}\label{unmin1}
\min\limits_{u\in BV(\Omega)} \int_\Omega g~u~\dd x + c\int_\Omega \max\left(0,2\left|u-\frac12\right|-1\right)~\dd x + \beta~J(u)
\end{equation}
with a penalty parameter $c>0$. Note that the max-term penalizes violations of $u\in[0,1]$ in \eqref{problem1relaxed} and yields an exact penalty \cite{exactpnlty1,exactpnlty2}, i.e., there exists $c^*>0$ such that the solution to \eqref{problem1relaxed} solves \eqref{unmin1}, as well. 
%One further finds that for sufficiently large $c$ the condition $u(x)\in[0,1]$ for almost every (a.e. in short) $x\in\Omega$ is enforced. Here, $\bar{u}$ is a solution of \eqref{unmin1}. 

In order to have uniqueness of a solution, which is not guaranteed for \eqref{unmin1}, we consider next
\begin{align}\label{unmin2}
\min\limits_{u\in BV(\Omega)}& \int_\Omega g~u~\dd x + c\int_\Omega \max\left(0,2\left|u-\frac12\right|-1\right)~\dd x\nonumber\\ &\qquad +\frac{\epsilon}{2} \int_\Omega |u-1|^2~\dd x +\frac{\epsilon}{2} \int_\Omega |u|^2~\dd x +\beta~J(u),\tag{$P$}
\end{align}
with arbitrary, but fixed $\epsilon>0$. Note that the additional quadratic terms (depending on the magnitude of $\epsilon$) prefer solutions with $u(x)\in\{0,1\}$.

In view of the non-differentiability of the TV regularization and the penalty term, respectively, we now study a dual formulation. For this purpose we formally compute the Fenchel dual of \eqref{unmin2} and set $\Lambda:=\nabla$ as well as
\begin{equation}
\mathcal{F}(u):=\int_\Omega g~u~\dd x + c\int_\Omega \max\left(0,2\left|u-\frac12\right|-1\right)~\dd x+\frac{\epsilon}{2} \int_\Omega |u-1|^2~\dd x +\frac{\epsilon}{2} \int_\Omega |u|^2~\dd x,\label{fun_F}
\end{equation}
and further
$$
\mathcal{G}(\vec{p}):=\beta\int_\Omega |\vec{p}|_1~\dd x,
$$
where $|\cdot|_1$ denotes the $l_1$-vector norm. Based on the definition of the Fenchel conjugate \cite{Analysis}, we get
\begin{align*}
\mathcal{F}^*(u^*)=&\frac{1}{4\epsilon}\big[\|\min(u^*-g+\epsilon+2c,0)\|^2+\|\max(u^*-g+\epsilon,0)\|^2
\\&+\|\max(u^*-g-\epsilon-2c,0)\|^2-\|\max(u^*-g-\epsilon,0)\|^2\big]-\frac{\epsilon}{2}|\Omega|,\\
\mathcal{G}^*(\vec{p}^*)=&I_{[-\beta\vec{1},\beta\vec{1}]}(\vec{p}^*),
\end{align*}
where 
\[
I_{[-\beta\vec{1},\beta\vec{1}]}(\vec{p}^*)=\left\{\begin{array}{ll}0,& \mbox{if}\ -\beta\vec{1}\leq\vec{p}^*(x)\leq\beta\vec{1}\ \mbox{for a.e.}\ x\in\Omega,\\ \infty, & \mbox{otherwise.}                                             
\end{array}\right.
\]
Thus, formally the dual of \eqref{unmin2} is given by
\begin{align}
\min\limits_{|\vec{q}(x)|_\infty\leq\beta \mbox{\scriptsize\ a.e. in } \Omega}&\quad \frac{1}{4\epsilon} \big[\|\min(\mathrm{div}\vec{q}-g+\epsilon+2c,0)\|^2+\|\max(\mathrm{div}\vec{q}-g+\epsilon,0)\|^2\nonumber\\
&\hspace{-2cm}+\|\max(\mathrm{div}\vec{q}-g-\epsilon-2c,0)\|^2-\|\max(\mathrm{div}\vec{q}-g-\epsilon,0)\|^2\big]-\frac{\epsilon}{2}|\Omega|,\hspace{1cm} \tag{$P^*$}\label{dual0}
\end{align}
where $\vec{q}=-\vec{p}^*$.

Based on first-order optimality conditions, solutions $\bar{u}$ and $\bar{\vec{q}}$ of \eqref{unmin2} and \eqref{dual0}, respectively, satisfy
\begin{align}
\bar{u}=&\left\{\begin{array}{ll}
           \frac{1}{2\epsilon}(\mathrm{div}\bar{\vec{q}}-g+\epsilon+2c), & \mathrm{if}\ \mathrm{div}\bar{\vec{q}}-g<-\epsilon-2c, \\
           0, & \mathrm{if}\ \mathrm{div}\bar{\vec{q}}-g\in[-\epsilon-2c,-\epsilon], \\
           \frac{1}{2\epsilon}(\mathrm{div}\bar{\vec{q}}-g+\epsilon), & \mathrm{if}\ |\mathrm{div}\bar{\vec{q}}-g|<\epsilon, \\
           1, & \mathrm{if}\ \mathrm{div}\bar{\vec{q}}-g\in[\epsilon,\epsilon+2c], \\
           \frac{1}{2\epsilon}(\mathrm{div}\bar{\vec{q}}-g+\epsilon-2c), & \mathrm{if}\ \mathrm{div}\bar{\vec{q}}-g>\epsilon+2c, \\
         \end{array}\right.\label{eqn_u}\\
\bar{\vec{q}}=&\beta\bigg(\frac{\bar{u}_{x_i}}{|\bar{u}_{x_i}|}\bigg)^n_{i=1}\ \mathrm{on}\ \{x: \bar{u}_{x_i}(x)\neq0\mbox{ for all }i\}.
\end{align}
We observe that if $\epsilon$ is sufficiently small and $c$ is sufficiently large such that $|\mathrm{div}\bar{\vec{q}}-g|$ is always in $[\epsilon,\epsilon+2c]$, then the expected solution $\bar{u}\in\{0,1\}$ is found directly. However, for $x\in\Omega$ with $\bar{u}(x)\in\{0,1\}$, we only get the characterization $\epsilon\leq|\mathrm{div}\bar{\vec{q}}(x)-g(x)|\leq\epsilon+2c$, but we do not obtain exact values for $\mathrm{div}\bar{\vec{q}}(x)-g(x)$. In order to cope with this incomplete characterization of $\mathrm{div}\bar{\vec{q}}-g$, we propose to add a regularization term to the objective of \eqref{dual0} yielding
\begin{align}\label{dual1}
\min\limits_{|\vec{q}(x)|_\infty\leq\beta \mbox{\scriptsize\ a.e. in } \Omega} &\quad \frac{\gamma}{2}\|\mathrm{div}\vec{q}-g\|^2_2+\frac{1}{4\epsilon}\big[\|\min(\mathrm{div}\vec{q}-g+\epsilon+2c,0)\|^2_2\nonumber\\
&+\|\max(\mathrm{div}\vec{q}-g+\epsilon,0)\|_2^2+\|\max(\mathrm{div}\vec{q}-g-\epsilon-2c,0)\|_2^2\nonumber\\
&-\|\max(\mathrm{div}\vec{q}-g-\epsilon,0)\|_2^2\big]-\frac{\epsilon}{2}|\Omega|\tag{$P^*_\gamma$},
\end{align}
where $\gamma>0$ is the dual regularization parameter. To understand the structural impact of the $\gamma$-regularization, we study the dual of \eqref{dual1} and relate the resulting problem to \eqref{unmin2}. For this purpose we introduce the function 
\begin{align*}
\mathcal{F}^{*}_\gamma(u^*)=&\frac{\gamma}{2}\|u^*-g\|^2_2+\frac{1}{4\epsilon}\big[\|\min(u^*-g+\epsilon+2c,0)\|^2_2+\|\max(u^*-g+\epsilon,0)\|_2^2
\\&+\|\max(u^*-g-\epsilon-2c,0)\|_2^2-\|\max(u^*-g-\epsilon,0)\|_2^2\big]-\frac{\epsilon}{2}|\Omega|.
\end{align*}
Based on the definition of the Fenchel conjugate we obtain the dual of $\mathcal{F}^*_\gamma$ as 
\begin{align}\label{fun_F**}
\mathcal{F}^{**}_\gamma(u^{**})=&\int_\Omega u^{**}~g~\dd x- \frac{1}{2\gamma(1+2\epsilon\gamma)}\big[\|\min(u^{**}+(\epsilon+2c)\gamma,0)\|^2_2\nonumber\\
&+\|\max(u^{**}+\epsilon\gamma,0)\|_2^2+\|\max(u^{**}-1-(\epsilon+2c)\gamma,0)\|_2^2\nonumber\\
&-\|\max(u^{**}-1-\epsilon\gamma,0)\|_2^2\big]+\frac{1}{2\gamma}\|u^{**}\|^2+\frac{\epsilon}{2}|\Omega|.
\end{align}
Comparing $\mathcal{F}^{**}_\gamma$ with $\mathcal{F}$ in the original problem \eqref{unmin2}, we come to the following conclusion.

\begin{proposition}
As $\gamma$ tends to zero, $\mathcal{F}^{**}_\gamma$ defined in \eqref{fun_F**} converges to $\mathcal{F}$ defined in \eqref{fun_F}.
\end{proposition}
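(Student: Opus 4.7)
The plan is to prove the statement by a pointwise-in-$x$ reduction, exploiting the fact that apart from the common linear term $\int_\Omega u\,g~\dd x$ and the constant $\frac{\epsilon}{2}|\Omega|$, both $\mathcal{F}$ and $\mathcal{F}_\gamma^{**}$ are integrals of functions that depend only on the value $t:=u^{**}(x)$. Expanding $\frac{\epsilon}{2}(t-1)^2+\frac{\epsilon}{2}t^2=\epsilon(t^2-t)+\frac{\epsilon}{2}$ shows that the constant $\frac{\epsilon}{2}|\Omega|$ appearing explicitly in $\mathcal{F}_\gamma^{**}$ already sits (implicitly, via integration) inside $\mathcal{F}$, so it suffices to verify that, for each fixed $t\in\mathbb{R}$, the pointwise quantity
$$ \Phi_\gamma(t) := -\frac{1}{2\gamma(1+2\epsilon\gamma)}\,B_\gamma(t) + \frac{t^2}{2\gamma} $$
(where $B_\gamma(t)$ is the bracketed sum of squares evaluated at $t$) converges as $\gamma\to 0^+$ to the corresponding pointwise part of $\mathcal{F}$, namely $c\max(0,2|t-\tfrac12|-1)+\epsilon(t^2-t)$.

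Next I would perform a case analysis according to the position of $t$ relative to the thresholds $0$, $1$, $\pm(\epsilon+2c)\gamma$, $\pm\epsilon\gamma$. For $\gamma$ small enough the cases reduce to: (a) $t\in(0,1)$, where only $\max(t+\epsilon\gamma,0)$ is active, so $B_\gamma(t)=(t+\epsilon\gamma)^2$; (b) $t>1$, where additionally $\max(t-1-(\epsilon+2c)\gamma,0)$ and $-\max(t-1-\epsilon\gamma,0)$ are active; and (c) $t<0$, which is symmetric to (b) via the $\min(t+(\epsilon+2c)\gamma,0)$ and $\max(t+\epsilon\gamma,0)$ terms. The boundary values $t\in\{0,1\}$ contribute a null set and can be treated separately or absorbed into the a.e.\ argument.

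I would work out case (a) in detail as the model computation: after combining the two $1/\gamma$ contributions over a common denominator, the numerator becomes $t^2(1+2\epsilon\gamma)-(t+\epsilon\gamma)^2 = 2\epsilon\gamma(t^2-t)-\epsilon^2\gamma^2$, which after dividing by $2\gamma(1+2\epsilon\gamma)$ yields $\epsilon(t^2-t)/(1+2\epsilon\gamma) - \epsilon^2\gamma/[2(1+2\epsilon\gamma)] \to \epsilon(t^2-t)$ as $\gamma\to 0^+$, matching $\mathcal{F}$'s integrand on $[0,1]$ where the penalty vanishes. For case (b) the difference $(t-1-(\epsilon+2c)\gamma)^2-(t-1-\epsilon\gamma)^2$ factors as $-2c\gamma\bigl[2(t-1)-(2\epsilon+2c)\gamma\bigr]$, and exactly the same cancellation of the $1/\gamma$ singularities leaves $\epsilon(t^2-t)+2c(t-1)$ in the limit, matching $c(2t-2)+\epsilon(t^2-t)$ from $\mathcal{F}$. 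Case (c) is analogous and produces $\epsilon(t^2-t)-2ct$.

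The main obstacle is precisely this delicate cancellation: each individual term of $\mathcal{F}_\gamma^{**}$ is of order $1/\gamma$, and the proposition's content is that the sign pattern of the four truncated squares is arranged so that the $O(\gamma^{-1})$ contributions cancel and only an $O(1)$ piece survives, reproducing exactly the exact-penalty function $c\max(0,2|t-\tfrac12|-1)$ together with the quadratic $\epsilon(t^2-t)$. Once this pointwise convergence is established, one may either read the statement as pointwise convergence of the functionals (in which case we are done, since $t\mapsto\Phi_\gamma(t)$ converges pointwise and all other terms are identical), or, if integral convergence for a fixed $u^{**}\in L^2(\Omega)$ is desired, apply dominated convergence using a $\gamma$-independent quadratic majorant of the integrand that follows easily from the explicit formula for $\Phi_\gamma(t)$.
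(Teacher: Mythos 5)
Your proposal is correct and follows essentially the same route as the paper: the paper partitions $\Omega$ into the five regions determined by the thresholds $-(\epsilon+2c)\gamma$, $-\epsilon\gamma$, $1+\epsilon\gamma$, $1+(\epsilon+2c)\gamma$ and rewrites $\mathcal{F}^{**}_\gamma$ exactly so that the $1/\gamma$ contributions cancel, which is the same case analysis and cancellation you perform pointwise in $t$ before passing to the integral (the paper bounds the transition-region terms and the $\gamma|\Omega_i|$ terms directly, where you invoke dominated convergence). The only slip is your remark that $t\in\{0,1\}$ contributes a null set---for the binary-valued $u$ of interest these are precisely the values attained on sets of positive measure---but it is harmless, since your case (a) computation is valid on the closed interval $[0,1]$, so the ``treated separately'' option you mention is immediate.
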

\begin{proof}
For making the relation between $\mathcal{F}^{**}_\gamma$ and $\mathcal{F}$ more explicit, we define
\begin{align*}
\Omega_1=&\{x\in\Omega: u(x)\leq-(\epsilon+2c)\gamma\}, & \Omega_2=&\{x\in\Omega: u(x)\in(-(\epsilon+2c)\gamma,-\epsilon\gamma)\},\\
\Omega_3=&\{x\in\Omega: u(x)\in[-\epsilon\gamma,1+\epsilon\gamma]\}, & \Omega_4=&\{x\in\Omega: u(x)\in(1+\epsilon\gamma,1+(\epsilon+2c)\gamma)\},\\
\Omega_5=&\{x\in\Omega: u(x)\geq1+(\epsilon+2c)\gamma\}.&
\end{align*}
Obviously, we have $\Omega=\bigcup_{i=1}^5\Omega_i$, and $\Omega_i\bigcap\Omega_j=\emptyset$ for any $i\neq j$ with $i,j=1,\cdots,5$. Then, $\mathcal{F}^{**}_\gamma$ can be equivalently written as
\begin{align*}
\mathcal{F}^{**}_\gamma(u)=&\int_\Omega u~g~\dd x+ \frac{c}{1+2\epsilon\gamma}\int_\Omega \max(0,2|u-\frac12|-1-2\epsilon\gamma)~\dd x\\& +\frac{1}{1+2\epsilon\gamma}\bigg[\frac{\epsilon}{2} \int_\Omega |u-1|^2~\dd x +\frac{\epsilon}{2} \int_\Omega |u|^2~\dd x\bigg]-\frac{\gamma(\epsilon^2+4c^2)}{2(1+2\epsilon\gamma)}|\Omega_1|\\
&+\frac{1}{1+2\epsilon\gamma}\int_{\Omega_2}\big[
\frac{1}{2\gamma}u^2+(\epsilon+2c)(u+\epsilon\gamma)\big]~\dd x-\frac{\gamma\epsilon^2}{2(1+2\epsilon\gamma)}|\Omega_3|\\&+\frac{1}{1+2\epsilon\gamma}\int_{\Omega_4}\big[
\frac{1}{2\gamma}(u-1)^2+(\epsilon+2c)(1+\epsilon\gamma-u)\big]~\dd x-\frac{\gamma(\epsilon^2+4c^2)}{2(1+2\epsilon\gamma)}|\Omega_5|.
\end{align*}
Based on the definitions of $\Omega_2$ and $\Omega_4$, we have 
\begin{align*}
\frac{\epsilon^2\gamma}{2}<\frac{1}{2\gamma}u^2<\frac{(\epsilon+2c)^2\gamma}{2}, &\qquad \ \forall x\in\Omega_2,\\
\frac{\epsilon^2\gamma}{2}<\frac{1}{2\gamma}(u-1)^2<\frac{(\epsilon+2c)^2\gamma}{2}, &\qquad \ \forall x\in\Omega_4.
\end{align*}
Hence, when $\gamma$ goes to zero, these two terms in $u$ are uniformly bounded. Furthermore, 
%$\Omega_2$ and $\Omega_4$ become empty sets as $\gamma\rightarrow0$. Thus 
the integrals on $\Omega_2$ and $\Omega_4$ in the above definition of $\mathcal{F}_\gamma^{**}$ as well as the terms involving $|\Omega_i|$, $i\in\{1,3,5\}$, tend to 0. Thus, we obtain
$$\lim_{\gamma\rightarrow0}\mathcal{F}^{**}_\gamma(u)=\mathcal{F}(u).$$
This concludes the proof.
\end{proof}

Now we return to problem \eqref{dual1} with $\vec{q}\in H_0(\mathrm{div}):=\{\vec{v}\in\mathbf{L}^2(\Omega):\mathrm{div}\vec{v}\in L^2(\Omega),\vec{v}\cdot\vec{n}=0\ \mbox{on}\ \partial\Omega\}$ where $\vec{n}$ is the outward unit normal to $\partial\Omega$. We equip $H_0(\mathrm{div})$ with the norm $\|\vec{v}\|^2_{H_0(\mathrm{div})}=\|\vec{v}\|^2_{\mathbf{L}^2}+\|\mathrm{div}\vec{v}\|^2_2$. Then, standard arguments prove that \eqref{dual1} admits a solution in $H_0(\mathrm{div})$.

Since the divergence operator has a nontrivial kernel, the solution of \eqref{dual1} need not be unique. The following problem, on the other hand, has a unique solution:
\begin{align}\label{dual1_2}
\min\limits_{\scriptsize\begin{array}{c}\vec{q}\in H_0(\mathrm{div})\\
|\vec{q}(x)|_\infty\leq\beta \mbox{\scriptsize\ a.e. in } \Omega\end{array}} & \frac{\gamma}{2}\|\mathrm{div}\vec{q}-g\|^2_2+\frac{1}{4\epsilon}\big[\|\min(\mathrm{div}\vec{q}-g+\epsilon+2c,0)\|^2_2\nonumber\\
&+\|\max(\mathrm{div}\vec{q}-g+\epsilon,0)\|_2^2+\|\max(\mathrm{div}\vec{q}-g-\epsilon-2c,0)\|_2^2\nonumber\\
&-\|\max(\mathrm{div}\vec{q}-g-\epsilon,0)\|_2^2\big]+\frac{\lambda}{2}\|\mathrm{P}_{\mathrm{div}}\vec{q}\|_{\mathbf{L}^2}^2,
\end{align}
%where $H_0(\mbox{div})=\{\vec{v}\in\mathbf{L}^2(\Omega):\mathrm{div}\vec{v}\in L^2(\Omega),\vec{v}\cdot\vec{n}=0\ \mbox{on}\ \partial\Omega\}$ with $\vec{n}$ as the outer normal to $\partial\Omega$. The space $H_0(\mathrm{div})$ is endowed with $\|\vec{v}\|^2_{H_0(\mathrm{div})}=\|\vec{v}\|^2_{\mathbf{L}^2}+\|\mathrm{div}\vec{v}\|^2_2$ as norm. In addition, 
Above, $\mathrm{P}_{\mathrm{div}}$ denotes the orthogonal projection in $\mathbf{L}^2(\Omega)$ onto $H_0(\mathrm{div}0):=\{\vec{v}\in H_0(\mathrm{div}): \mathrm{div}\vec{v}=0\  \mbox{a.e. in}\ \Omega\}$. 

\begin{proposition}
The solution of \eqref{dual1_2} is unique.
\end{proposition}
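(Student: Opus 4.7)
The plan is to show that the objective in \eqref{dual1_2} is strictly convex on the convex feasible set; uniqueness then follows by the standard midpoint argument.

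First, I would decompose the objective additively according to the Helmholtz-style splitting underlying $P_{\mathrm{div}}$. Setting
\[
J(\vec{q}) \;=\; \mathcal{F}^*_\gamma(\mathrm{div}\vec{q}) \;+\; \frac{\lambda}{2}\|P_{\mathrm{div}}\vec{q}\|^2_{\mathbf{L}^2},
\]
I note that $\mathcal{F}^*_\gamma = \mathcal{F}^* + \frac{\gamma}{2}\|\cdot - g\|_{L^2}^2$, where $\mathcal{F}^*$ is the Fenchel conjugate of the convex functional $\mathcal{F}$ and hence convex. Adding the strictly convex quadratic $\frac{\gamma}{2}\|\cdot - g\|_{L^2}^2$ renders $\mathcal{F}^*_\gamma$ strictly convex in its $L^2$-argument. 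Likewise, $\tfrac{\lambda}{2}\|\cdot\|^2_{\mathbf{L}^2}$ is strictly convex in its argument.

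Next, let $\vec{q}_1, \vec{q}_2$ be two minimizers. Since the feasible set is convex, the midpoint $\vec{q}_m = (\vec{q}_1+\vec{q}_2)/2$ is feasible, and by convexity of $J$, $J(\vec{q}_m) \leq \tfrac{1}{2}(J(\vec{q}_1)+J(\vec{q}_2))$, with equality only when the strictly convex components of $J$ take equal values at $\vec{q}_1$ and $\vec{q}_2$. Since $\vec{q}_m$ is also a minimizer, equality must hold. Strict convexity of $\mathcal{F}^*_\gamma$ then forces $\mathrm{div}\vec{q}_1 = \mathrm{div}\vec{q}_2$ a.e.\ in $\Omega$, and strict convexity of $\|\cdot\|^2_{\mathbf{L}^2}$ forces $P_{\mathrm{div}}\vec{q}_1 = P_{\mathrm{div}}\vec{q}_2$.

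Finally, I would close the argument by showing these two identities together imply $\vec{q}_1 = \vec{q}_2$. The difference $\vec{w} := \vec{q}_1 - \vec{q}_2 \in H_0(\mathrm{div})$ satisfies $P_{\mathrm{div}}\vec{w} = 0$, so $\vec{w} = (I - P_{\mathrm{div}})\vec{w}$ lies in the $\mathbf{L}^2$-orthogonal complement of $H_0(\mathrm{div}0)$. On the other hand, $\mathrm{div}\vec{w} = 0$ together with the normal boundary condition places $\vec{w}$ in $H_0(\mathrm{div}0)$ itself. Since $H_0(\mathrm{div}0)$ is a closed subspace of $\mathbf{L}^2(\Omega)$, one has $\vec{w} \in H_0(\mathrm{div}0)\cap H_0(\mathrm{div}0)^\perp = \{0\}$, whence $\vec{q}_1 = \vec{q}_2$. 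The one subtle point to double-check is the convexity of $\mathcal{F}^*$: although pointwise it features the concave-looking term $-\|\max(\mathrm{div}\vec{q} - g - \epsilon, 0)\|^2$, this is necessarily compensated by the three convex squares, since $\mathcal{F}^*$ is the Fenchel conjugate of the convex $\mathcal{F}$; the $\gamma$-term then upgrades the resulting convexity to the strict convexity on which the argument relies.
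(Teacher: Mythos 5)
Your proof is correct, but it takes a genuinely different route from the paper. The paper groups the two smooth quadratic terms into $Q(\vec{q})=\tfrac{\gamma}{2}\|\mathrm{div}\vec{q}-g\|_2^2+\tfrac{\lambda}{2}\|\mathrm{P}_{\mathrm{div}}\vec{q}\|_{\mathbf{L}^2}^2$, observes that the remaining min/max combination is convex, and asserts the coercivity estimate $Q''(\vec{q},\vec{q})=\gamma\|\mathrm{div}\vec{q}\|_2^2+\lambda\|\mathrm{P}_{\mathrm{div}}\vec{q}\|_{\mathbf{L}^2}^2\geq\kappa\|\vec{q}\|_{H_0(\mathrm{div})}^2$, i.e.\ strong convexity on $H_0(\mathrm{div})$, from which uniqueness follows; the estimate itself is stated without proof and is essentially a Helmholtz/Poincar\'e-type inequality. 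You instead group the $\gamma$-quadratic with the nonsmooth conjugate part, so that $\mathcal{F}^*_\gamma$ is strictly convex in the $L^2$-argument $\mathrm{div}\vec{q}$ (your appeal to convexity of $\mathcal{F}^*$ as a Fenchel conjugate is legitimate --- one could also check the scalar integrand directly), run a midpoint argument to force $\mathrm{div}\vec{q}_1=\mathrm{div}\vec{q}_2$ and $\mathrm{P}_{\mathrm{div}}\vec{q}_1=\mathrm{P}_{\mathrm{div}}\vec{q}_2$, and then close with the qualitative injectivity fact that $\mathrm{div}\vec{w}=0$ and $\mathrm{P}_{\mathrm{div}}\vec{w}=0$ put $\vec{w}$ in $H_0(\mathrm{div}0)\cap H_0(\mathrm{div}0)^\perp=\{0\}$. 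Both arguments rest on the same structural point --- the pair $(\mathrm{div}\vec{q},\mathrm{P}_{\mathrm{div}}\vec{q})$ determines $\vec{q}$ in $H_0(\mathrm{div})$, and both need $\lambda>0$ --- but yours replaces the unproved quantitative coercivity constant $\kappa$ by a fully justified orthogonality argument, so it is more self-contained for uniqueness alone, whereas the paper's version additionally delivers strong convexity, which is useful beyond uniqueness (stability, conditioning of the Newton system). One small wording issue: ``equality only when the strictly convex components take equal values'' should read that equality in the summed convexity inequality forces equality in each component's inequality, and strict convexity then forces equality of the \emph{arguments}; your subsequent deduction is exactly this, so the slip is harmless.
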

\begin{proof}
Since the set of feasible $\vec{q}$ in \eqref{dual1_2} is convex and the term $\|\min(\mathrm{div}\vec{q}-g+\epsilon+2c,0)\|^2_2+\|\max(\mathrm{div}\vec{q}-g+\epsilon,0)\|_2^2+\|\max(\mathrm{div}\vec{q}-g-\epsilon-2c,0)\|_2^2-\|\max(\mathrm{div}\vec{q}-g-\epsilon,0)\|_2^2$ is convex but not strictly convex, the main target now is to show that $Q(\vec{q}):=\frac{\gamma}{2}\|\mathrm{div}\vec{q}-g\|^2_2+\frac{\lambda}{2}\|\mathrm{P}_{\mathrm{div}}\vec{q}\|_{\mathbf{L}^2}^2$ is strictly convex. Calculating the second derivative of $Q$, we get
\[
Q''(\vec{q},\vec{q})=\gamma\|\mathrm{div}\vec{q}\|^2_2+\lambda\|\mathrm{P}_{\mathrm{div}}\vec{q}\|_{\mathbf{L}^2}^2\geq\kappa\|\vec{q}\|^2_{H_0(\mathrm{div})}
\]
for a constant $\kappa>0$ independent of $\vec{q}\in H_0(\mathrm{div})$. From this we infer the assertion.
\end{proof}

\begin{remark}
In our numerics we frequently set $\lambda=0$ without running into difficulties due to non-uniqueness. This suggests that the box constraint $|\vec{q}(x)|_\infty\leq\beta$ often yields uniqueness.
\end{remark}

For solving \eqref{dual1_2} and in view of \cite{semismooth} (see also \cite{MR2219285}) we consider the approximating unconstrained minimization problem
\begin{align}\label{dual2}
\min\limits_{\vec{q}\in\mathbf{H}^1_0(\Omega)} & \frac{\gamma}{2}\|\mathrm{div}\vec{q}-g\|^2_2+\frac12\big[\|\min(\mathrm{div}\vec{q}-g+\epsilon+2c,0)\|^2_2+\|\max(\mathrm{div}\vec{q}-g+\epsilon,0)\|_2^2
\nonumber\\
&+\|\max(\mathrm{div}\vec{q}-g-\epsilon-2c,0)\|_2^2-\|\max(\mathrm{div}\vec{q}-g-\epsilon,0)\|_2^2\big]
+\frac{\lambda}{2}\|\mathrm{P}_{\mathrm{div}}\vec{q}\|_{\mathbf{L}^2}^2\nonumber\\
&+\frac{1}{2\alpha}\|\nabla \vec{q}\|_{\mathbf{L}^2}^2+\frac{\alpha}{2}\|\max(0,(\vec{q}-\beta))\|^2_{\mathbf{L}^2}+\frac{\alpha}{2}\|\min(0,(\vec{q}+\beta))\|^2_{\mathbf{L}^2},\tag{$P^*_{\gamma,\alpha}$}
\end{align}
where $\alpha>0$. Note that \eqref{dual2} and \eqref{dual1_2} differ in that we add three quadratic terms weighted by $\alpha>0$ in \eqref{dual2}. Furthermore, we keep here the same notations for $\gamma$ and $\lambda$ although these quantities equal the corresponding ones in \eqref{dual1_2}, but in \eqref{dual2} they are multiplied by $2\epsilon$, respectively. Straightforward arguments yield that the minimization problem \eqref{dual2} tends to \eqref{dual1} as $\alpha\rightarrow\infty$. The unique solution $\bar{\vec{q}}$ of \eqref{dual2} satisfies the first-order optimality condition
\begin{eqnarray}\label{ELeqn}
\mathcal{Q}(\bar{\vec{q}})&:=&-\nabla\big[\gamma(\mathrm{div}\bar{\vec{q}}-g)+\min(\mathrm{div}\bar{\vec{q}}-g+\epsilon+2c,0)+\max(\mathrm{div}\bar{\vec{q}}-g+\epsilon,0)\nonumber\\
&&\quad+\max(\mathrm{div}\bar{\vec{q}}-g-\epsilon-2c,0)-\max(\mathrm{div}\bar{\vec{q}}-g-\epsilon,0)\big]+\lambda\mathrm{P}_{\mathrm{div}}\bar{\vec{q}}\nonumber\\
&&\qquad-\frac{1}{\alpha}\vec{\Delta}\bar{\vec{q}}+\max(0,\alpha(\bar{\vec{q}}-\beta))
+\min(0,\alpha(\bar{\vec{q}}+\beta))=0.
\end{eqnarray}

It is well-known that the max- and min-operators are semismooth in our function space setting. This function space property of $\mathcal{Q}(\cdot)$ was the sole reason for adding $(2\alpha)^{-1}\|\nabla\vec{q}\|^2_{\mathbf{L}^2}$ in \eqref{dual2}. In a purely finite dimensional approach one may discard this term. As a consequence, for the efficient numerical solution of \eqref{ELeqn}, we utilize the semismooth Newton technique of \cite{semismooth2}. 

In our numerics we use the following generalized derivatives of the max- and the min-operations above:
$$
 D\max(\varphi,0)(x)=\left\{\begin{array}{ll}1,& \ \mbox{if}\ \varphi(x)>0,\\0, & \ \mbox{otherwise,}\end{array}\right.\quad\text{and}\quad
D\min(\varphi,0)(x)=\left\{\begin{array}{ll}1,& \ \mbox{if}\ \varphi(x)<0,\\0, & \ \mbox{otherwise.}\end{array}\right.
$$
%
%\begin{align*}
% D\max(\varphi,0)(x)=&\left\{\begin{array}{ll}1,& \ \mbox{if}\ \varphi(x)>0,\\0, & \ \mbox{otherwise,}\end{array}\right.\\
%D\min(\varphi,0)(x)=&\left\{\begin{array}{ll}1,& \ \mbox{if}\ \varphi(x)<0,\\0, & \ \mbox{otherwise.}\end{array}\right.
%\end{align*}
Then the generalized Newton step $\delta\vec{q}_k\in\mathbf{H}^1_0(\Omega)$ at the current iterate $\vec{q}_k\in\mathbf{H}^1_0(\Omega)$ is the unique solution to
\[
\bigg[-\nabla\big(\gamma I+\chi_{\mathcal{A}^k_1}+\chi_{\mathcal{A}^k_2}+\chi_{\mathcal{A}^k_3}-\chi_{\mathcal{A}^k_4}
\big)\mathrm{div}+\lambda\mathrm{P}_{\mathrm{div}}-\frac{1}{\alpha}\vec{\Delta}
+\alpha(\chi_{\mathcal{A}^k_5}+\chi_{\mathcal{A}^k_6})\bigg]\delta\vec{q}=-\mathcal{Q}(\vec{q}_k),
\]
with the sets
\begin{align*}
\mathcal{A}^k_1=&\{\mathrm{div}\vec{q}_k-g<-\epsilon-2c\}, & \mathcal{A}^k_2=&\{\mathrm{div}\vec{q}_k-g>-\epsilon\},\\
\mathcal{A}^k_3=&\{\mathrm{div}\vec{q}_k-g>\epsilon+2c\}, & \mathcal{A}^k_4=&\{\mathrm{div}\vec{q}_k-g>\epsilon\},\\
\mathcal{A}^k_5=&\{\vec{q}_k>\beta\}, & \mathcal{A}^k_6=&\{\vec{q}_k<-\beta\}.
\end{align*}
The next iterate is defined as $\vec{q}^{k+1}:=\vec{q}^{k}+\delta\vec{q}^{k}$.
Using the results of \cite{semismooth2} it can be shown that the above iteration converges at a superlinear rate to the solution $\bar{\vec{q}}$ of \eqref{dual2} provided that the initial point $\vec{q}^0$ is sufficiently close to $\bar{\vec{q}}$.
We mention that the latter locality requirement of the initial point seemed not restrictive in our numerical practice; see below. 

Upon discretization, keeping the same notations as above, we have $g\in\mathbb{R}^n$, $\vec{q}^{k}\in\mathbb{R}^{dn}$ and, for $\mathcal{A}\subset\{1,\cdots,m\}$, $\chi_{\mathcal{A}}\in\mathbb{R}^{m\times m}$ denotes a diagonal matrix with the diagonal entries $t_i=1$ if $i\in\mathcal{A}$ and $t_i=0$ else for $i=1,\cdots,m$. Also, we have
\begin{align*}
\mathcal{A}^k_1=&\{i: (\mathrm{div}\vec{q}_k)_i-g_i<-\epsilon-2c\}, & \mathcal{A}^k_2=&\{i: (\mathrm{div}\vec{q}_k)_i-g_i>-\epsilon\},\\
\mathcal{A}^k_3=&\{i: (\mathrm{div}\vec{q}_k)_i-g_i>\epsilon+2c\}, & \mathcal{A}^k_4=&\{i: (\mathrm{div}\vec{q}_k)_i-g_i>\epsilon\},\\
\mathcal{A}^k_5=&\{i: (\vec{q}_k)_i>\beta\}, & \mathcal{A}^k_6=&\{i: (\vec{q}_k)_i<-\beta\}.
\end{align*}
Further, $I\in\mathbb{R}^{dn\times dn}$ denotes the identity matrix, $\mathrm{div}=-\nabla^{\top}\in\mathbb{R}^{n\times dn}$, and
\begin{align*}
\vec{\Delta}&=\left[\begin{array}{ccc}
               \Delta&&0\\&\ddots&\\0&&\Delta
              \end{array}\right]\in\mathbb{R}^{dn\times dn},\qquad \Delta\in\mathbb{R}^{n\times n},\\
{\nabla}&=\left[\begin{array}{c}
               \nabla_{x_1}\\\vdots\\ \nabla_{x_d}
              \end{array}\right]\in\mathbb{R}^{dn\times n},\qquad \nabla_{x_i}\in\mathbb{R}^{n\times n}\mbox{ with }i=1,\cdots,d.
\end{align*}
In our numerics, we discretize the Laplace operator by the standard five-point stencil with homogenous Dirichlet
boundary conditions and use forward differences for the gradient and corresponding discrete adjoint scheme for the divergence operator.

Notice that the system matrix of the semismooth Newton step is symmetric and positive definite. Consequently, we use the preconditioned conjugate gradient (PCG) method to solve the associated linear system. Upon convergence of the discrete Newton iteration, we obtain the solution $u$ to the discrete counterpart of \eqref{unmin2} based on the discrete version of \eqref{eqn_u}.

\section{Applications}

In the following we discuss several important applications of optimization problems with shapes as the unknown and apply the above relaxation results. Moreover, associated numerical results allow us to study the behavior of our semismooth Newton algorithm.

\subsection{The Rudin-Osher-Fatemi model for binary image denoising}
An important model for restoring images from data corrupted by Gaussian noise is the Rudin-Osher-Fatemi (ROF) problem \cite{TV1, TVmodel}
%\[
%\min_{u\in BV(\Omega)} \frac{\mu}{2}\int_\Omega (u-f)^2~\dd x+J(u)
%\]
%for a suitable regularization parameter $\mu\in\mathbb{R}^+$. The associated optimization problem can equivalently %be expressed as
\begin{equation}\label{ROF}
\min_{u\in BV(\Omega)} \frac{1}{2}\int_\Omega (u-f)^2~\dd x+\beta~J(u) 
\end{equation}
where $\beta>0$ is a suitably chosen regularization parameter. If one is interested in binary images (e.g., black and white images), then the minimization process is constrained to $u\in BV(\Omega;\{0,1\})$. Clearly, the resulting problem is non-convex. However, as we shall see below, by using the techniques above there are several ways of relaxing \eqref{ROF} for binary images to obtain a convex problem.

Chan and Esedoglu \cite{analysisL1TV1} considered a version of the ROF-functional for binary images, where they replaced the quadratic fidelity term by the $L^1$-functional $\int_\Omega |u-f|\ dx$. Clearly, if $u$ and $f$ are binary, i.e., they take values in $\{0,1\}$ only, then $|u(x)-f(x)|\in\{0,1\}$ for almost every $x$ and hence, the $L^1$-functional is equivalent to the $L^2$-functional, since $|u-f|=|u-f|^2$. Using a proof technique similar to the one above, they could show that the relaxation of the L$^1$-functional to $BV(\Omega;[0,1])$ is equivalent in the sense that indicator functions of almost all level sets of minimizers are solutions of the problem in $BV(\Omega;\{0,1\})$. A disadvantage of this relaxation is the additional non-smooth term, which increases the computational effort in minimizing the relaxed functional.

Instead, we consider here a linear relaxation of the first term, using the equivalence result of Lemma \ref{linearquadraticlemma}. For this purpose we start with $u\in BV(\Omega;\{0,1\})$ and obtain
\[
\frac12\int_\Omega (u-f)^2~\dd x=\int_\Omega \left(\frac12-f\right)~u~\dd x+\frac12\int_\Omega f^2~\dd x.
\]
Thus, the binary valued problem
\begin{equation}\label{ROF_binary}
\min\limits_{u\in BV(\Omega;\{0,1\})} \frac12\int_\Omega (u-f)^2~\dd x +\beta~J(u) 
\end{equation} 
is equivalent to \eqref{problem1} with $g\equiv\frac12-f$. Consequently, Theorem \ref{mainthm1} applies, proving that a
minimizer of \eqref{problem1relaxed} with the above choice of $g$ and for $f\in L^2(\Omega)$ almost surely yields a solution of \eqref{ROF_binary} upon thresholding.

%\begin{corollary}
%Let $u\in BV(\Omega;[0,1])$ be a minimizer of \eqref{problem1relaxed} with $g=\frac12-f$ for $f\in L^2(\Omega)$. Then, for almost every $t\in(0,1)$, the function $u^t\in BV(\Omega;\{0,1\})$ defined as the indicator function of the level set $\{u>t\}$, i.e.,
%\[
%u^t(x)=\left\{\begin{array}{ll}
%         1& \mathrm{if}\  u>t \\
%         0& \mathrm{else}
%       \end{array}\right.
%\]
%is a minimizer of \eqref{ROF_binary}.
%\end{corollary}

Note that in contrast to the analysis in \cite{analysisL1TV1} we do not need to confine the data $f$ to be binary, but rather consider arbitrary $L^2$-functions as data. In order to illustrate existence and uniqueness in the model, below we consider two special cases in $\Omega\in(-1,1)$.

\begin{example} Let $\beta<\frac14$ and $f$ be any function satisfying $f(x)=1$ for $x\leq0$, $f(x)<\frac14$ for $x>0$. Then, \eqref{ROF_binary} and 
\begin{equation}\label{ROF_binary2}
\min_{u\in BV(\Omega;[0,1])}\int_\Omega \left(\frac12-f\right)~u~\dd x+\beta~J(u)
\end{equation}
have a unique solution given (in an almost everywhere sense) by
\[
\hat{u}(x)=\left\{\begin{array}{ll}
         1,& \mathrm{if}\  x\leq0, \\
         0,& \mathrm{else.}
       \end{array}\right.
\]
This can be seen as follows. We have
\[
\int_\Omega \left(\frac12-f\right)~\hat{u}~\dd x+\beta~J(\hat{u})=\int_{-1}^0 \left(\frac12-f\right)~\dd x+\beta=\beta-\frac12.
\]
For arbitrary $u\in BV(\Omega;\{0,1\})$, we have
\begin{align*}
\int_\Omega \left(\frac12-f\right)~u~\dd x&=\int_{-1}^0\underbrace{\left(\frac12-f\right)}_{\leq 0}~\underbrace{u}_{\leq1}~\dd x+\int_{0}^1\underbrace{\left(\frac12-f\right)}_{>0}~\underbrace{u}_{\geq0}~\dd x\\ &\geq\int_{-1}^0\left(\frac12-f\right)~\dd x=-\frac12.
\end{align*}
Thus, a smaller objective value than for $\hat{u}$ can only be achieved for functions $u\in BV(\Omega;\{0,1\})$ with $J(u)<J(\hat{u})=1$. The latter is possible only for $u\equiv0$ or $u\equiv1$. For $u\equiv0$, we have 
\[
\int_\Omega \left(\frac12-f\right)~{u}~\dd x+\beta~J({u})=0>\beta-\frac12,
\]
and for $u\equiv1$ we find
\[
\int_\Omega \left(\frac12-f\right)~{u}~\dd x+\beta~J({u})=\int_{-1}^1\left(\frac12-f\right)~\dd x>-\frac14>\beta-\frac12.
\] 
Hence, $\hat{u}$ is the unique solution of 
\[
\min_{u\in BV(\Omega;\{0,1\})}\int_\Omega \left(\frac12-f\right)~u~\dd x+\beta~J(u).
\]
Due to our equivalence and uniqueness results $\hat{u}$ is also the unique minimizer of the relaxed problem \eqref{ROF_binary2} and of \eqref{ROF_binary}. 
\end{example}

\begin{example}
Let $0<\varepsilon<1$, $\beta<\frac{1-\varepsilon}{4}$, and $f$ be defined by
\[
f(x)=\left\{\begin{array}{ll}
         1,& \mathrm{if}\  x\leq-\varepsilon, \\
	\frac12, & \mathrm{if}\ -\varepsilon<x\leq\varepsilon,\\
         0,& \mathrm{if}\ x>\varepsilon.
       \end{array}\right.
\]
Then, each function $\hat{u}$ satisfying
\[
\hat{u}(x)=\left\{\begin{array}{ll}
         1,& \mathrm{if}\  x\leq-\varepsilon, \\
         0,& \mathrm{if}\ x>\varepsilon,
       \end{array}\right.
\]
is a solution of \eqref{ROF_binary2}.

In fact, for each $\hat{u}$ defined as above, we have
\[
\int_\Omega \left(\frac12-f\right)~\hat{u}~\dd x+\beta~J(\hat{u})=\int_{-1}^{-\varepsilon} \left(\frac12-f\right)~\dd x+\beta=\beta-\frac12(1-\varepsilon).
\]
Due to the above results, the relaxed problem \eqref{ROF_binary2} has always a solution in $BV(\Omega;\{0,1\})$, so that it suffices to consider functions in this class. For arbitrary $u\in BV(\Omega;\{0,1\})$, we have
\begin{align*}
\int_\Omega \left(\frac12-f\right)~u~\dd x&=\int_{-1}^{-\varepsilon}\underbrace{\left(\frac12-f\right)}_{\leq 0}~\underbrace{u}_{\leq1}~\dd x+\int_{-\varepsilon}^1\underbrace{\left(\frac12-f\right)}_{\geq0}~\underbrace{u}_{\geq0}~\dd x\\ &\geq\int_{-1}^{-\varepsilon}\left(\frac12-f\right)~\dd x=-\frac12(1-\varepsilon).
\end{align*}
Again, a smaller objective value than for $\hat{u}$ can only be achieved for functions $u\in BV(\Omega;\{0,1\})$ with $J(u)<J(\hat{u})=1$, i.e., $u\equiv0$ or $u\equiv1$. For $u\equiv0$, we have 
\[
\int_\Omega \left(\frac12-f\right)~{u}~\dd x+\beta~J({u})=0>\beta-\frac{1-\varepsilon}{2},
\]
and for $u\equiv1$ we obtain
\[
\int_\Omega \left(\frac12-f\right)~{u}~\dd x+\beta~J(u)=0>\beta-\frac{1-\varepsilon}{2}.
\] 
Thus, $\hat{u}$ is a solution of \eqref{ROF_binary2}. Furthermore, we readily conclude that each function
\[
u^\nu(x)=\left\{\begin{array}{ll}
         1,& \mathrm{if}\  x\leq\nu, \\
         0,& \mathrm{if}\ x>\nu,
       \end{array}\right.
\]
is a solution of \eqref{ROF_binary} if $\nu\in(-\varepsilon,\varepsilon)$. Therefore, in this case there is no unique reconstruction of $f$. 
\end{example}

Now we study the behavior of our algorithm for restoring binary images corrupted by Gaussian noise. In Figure \ref{result_ROF} we show the results obtained by solving \eqref{dual2} for restoring degraded images with 30\% and 50\% Gaussian noise, respectively. For all tests related to this example we fix $c=100$, $\epsilon=10^{-7}$, $\gamma=0.1$, and $\alpha=10^3$. We stop the semismooth Newton iteration as soon as the initial residual is reduced by a factor of $10^{-8}$, or the difference between two successive residuals was smaller than $10^{-8}$. For solving the linear system occurring in each Newton step we utilize the PCG-method. The preconditioner consists of the Laplacian and the terms involving the indicator functions of $\mathcal{A}_5^k$ and $\mathcal{A}_6^k$. The stopping tolerance for the PCG-method in iteration $k+1$ is given by
\[ 
\mathrm{tol}_{k+1}=10^{-3}\cdot\min\left\{\left(\frac{\mathrm{res}_k}{\mathrm{res}_0}\right)^{3/2},\frac{\mathrm{res}_k}{\mathrm{res}_0}\right\},
\]
where $\mathrm{res}_k$ denotes the residual of the linear system \eqref{ELeqn} at $\vec{q}_k$. Thus, we use an inexact semismooth Newton iteration.

For the reconstruction we explore different values of $\beta$, which controls the trade-off between a good fit of $f$ and a smoothness requirement due to the total variation regularization. From the results shown in Figure \ref{result_ROF}, we find that small $\beta$ preserves details, but at the same time some noticeable noise remains; otherwise, if $\beta$ becomes large, details are over-regularized. As we fix sufficiently small $\epsilon$ and large enough $c$ in our tests, we obtain the expected binary reconstruction $u$ directly. This confirms our earlier observation in connection with \eqref{eqn_u}. In all other cases, we simply set $t=0.5$ and obtain the binary result $u^t$ as proposed in Theorem \ref{mainthm1}. Concerning the algorithmic behavior, from the residual plots we observe that despite the inexact linear system solves our algorithm only requires a rather small numbers of iterations until successful termination. This fast convergence can be attributed to the use of a semismooth Newton solver.
Moreover, the iteration numbers are found to be stable with respect to different noise levels. 

\begin{figure}[t]
\begin{center}
\begin{minipage}[t]{3cm}
\includegraphics[height=3cm]{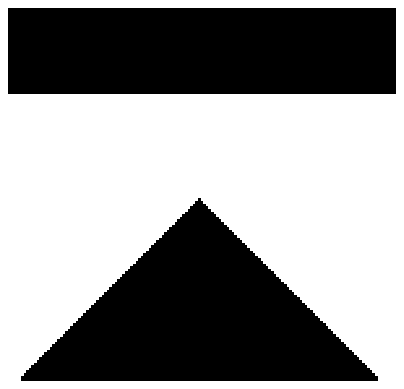}\\
\centering{(a)}
\end{minipage}
\begin{minipage}[t]{3cm}
\includegraphics[height=3cm]{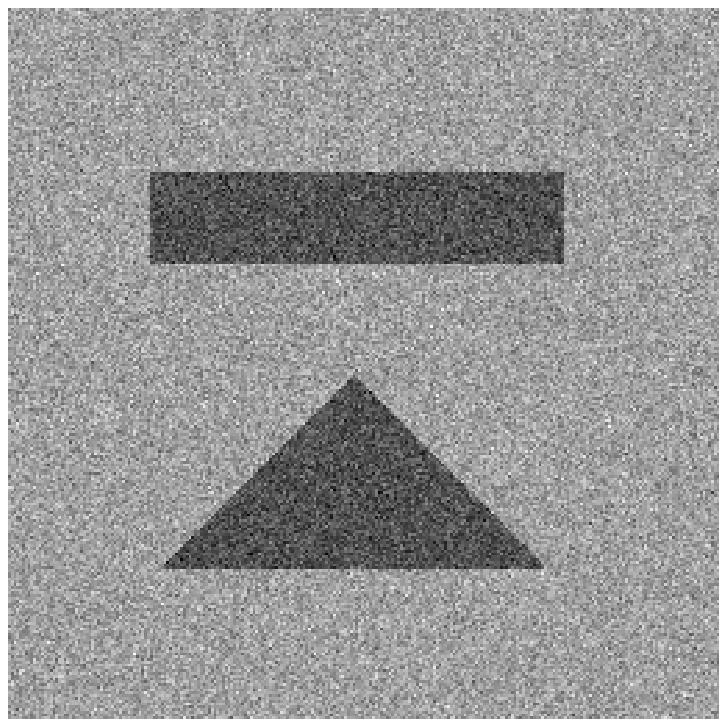}\\
\centering{(b)}
\end{minipage}
\begin{minipage}[t]{3cm}
\includegraphics[height=3cm]{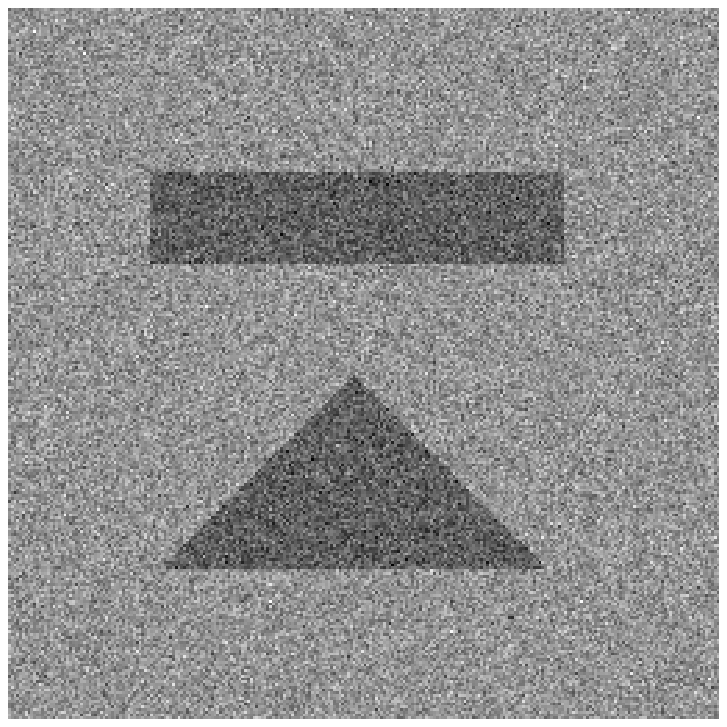}\\
\centering{(c)}
\end{minipage}\\
\begin{minipage}[t]{3cm}
\includegraphics[height=3cm]{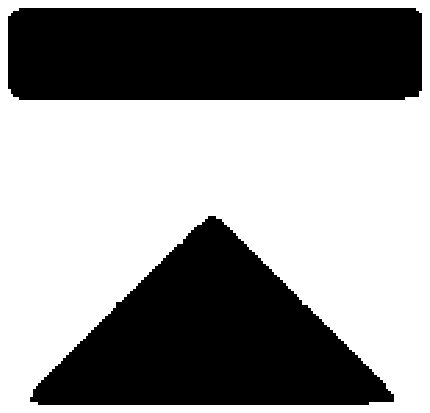}
\end{minipage}
\begin{minipage}[t]{3cm}
\includegraphics[height=2.5cm]{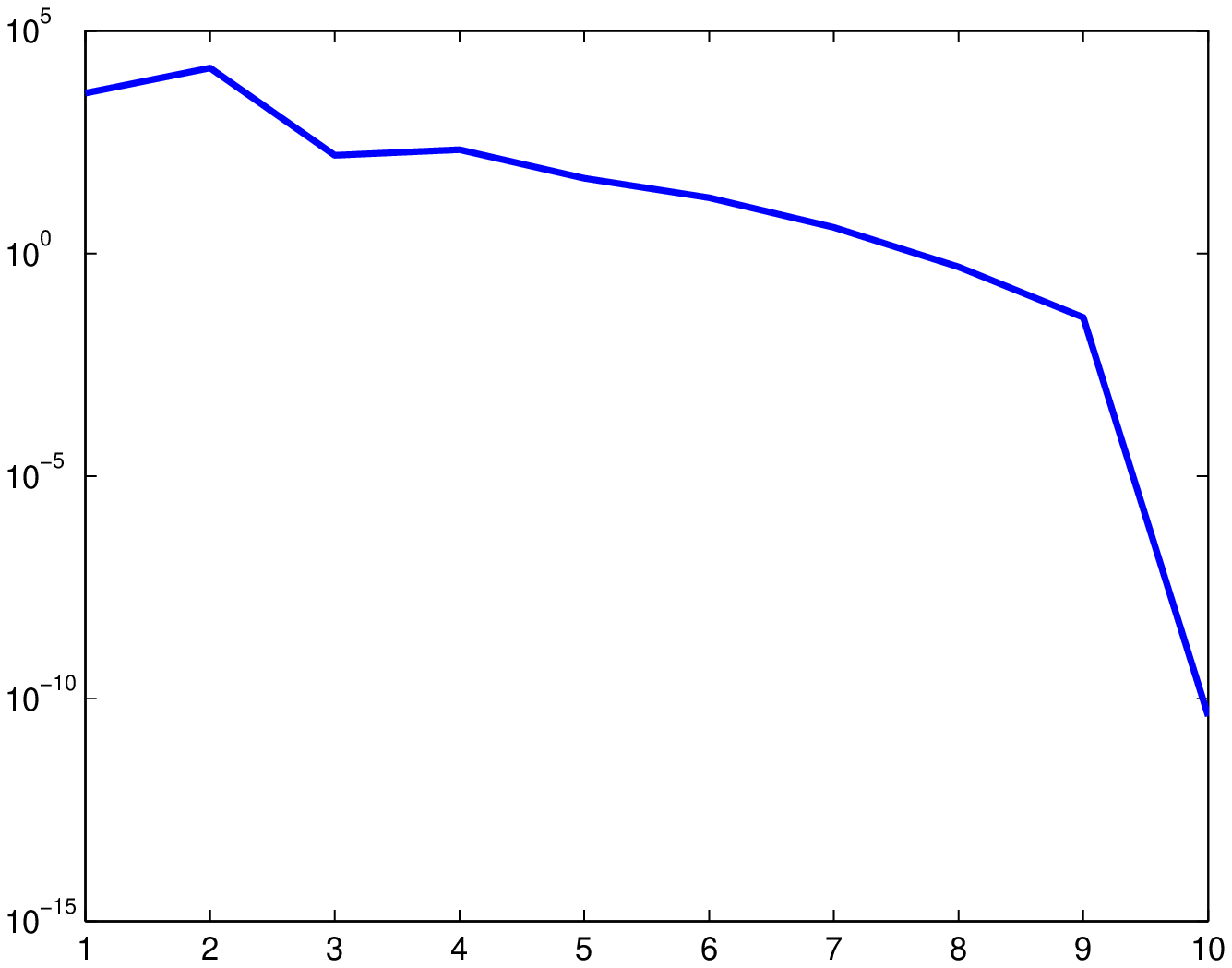}
\end{minipage}
\begin{minipage}[t]{3cm}
\includegraphics[height=3cm]{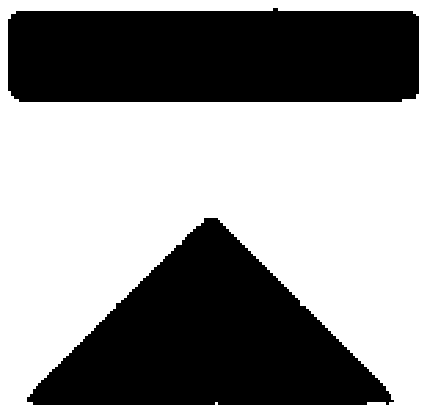}
\end{minipage}
\begin{minipage}[t]{3cm}
\includegraphics[height=2.5cm]{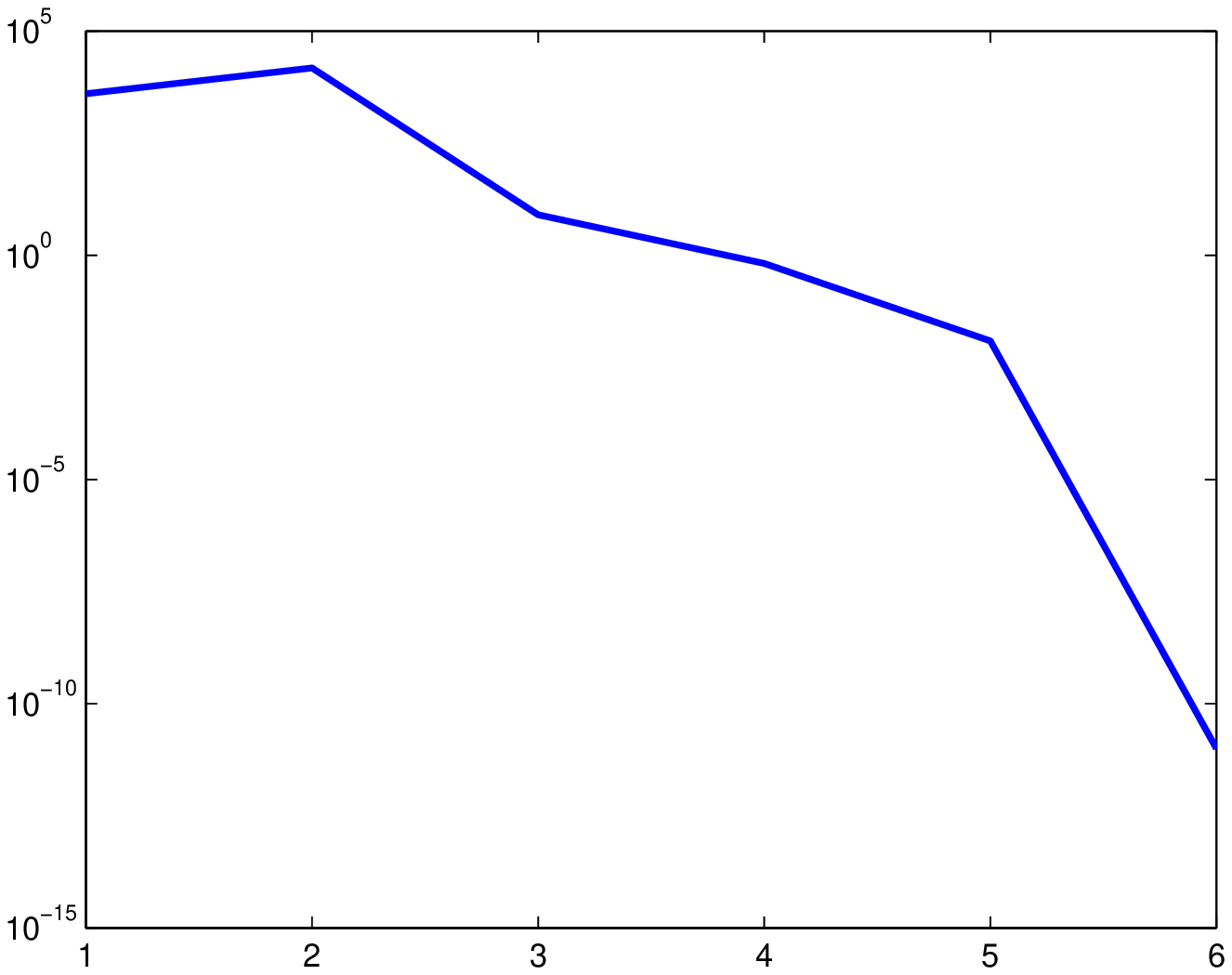}
\end{minipage}\\
\begin{minipage}[t]{3cm}
\includegraphics[height=3cm]{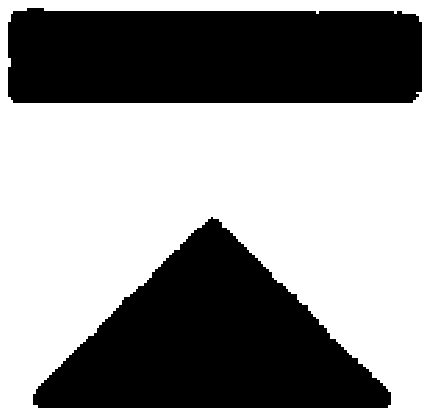}\\
\centering{(d)}
\end{minipage}
\begin{minipage}[t]{3cm}
\includegraphics[height=2.5cm]{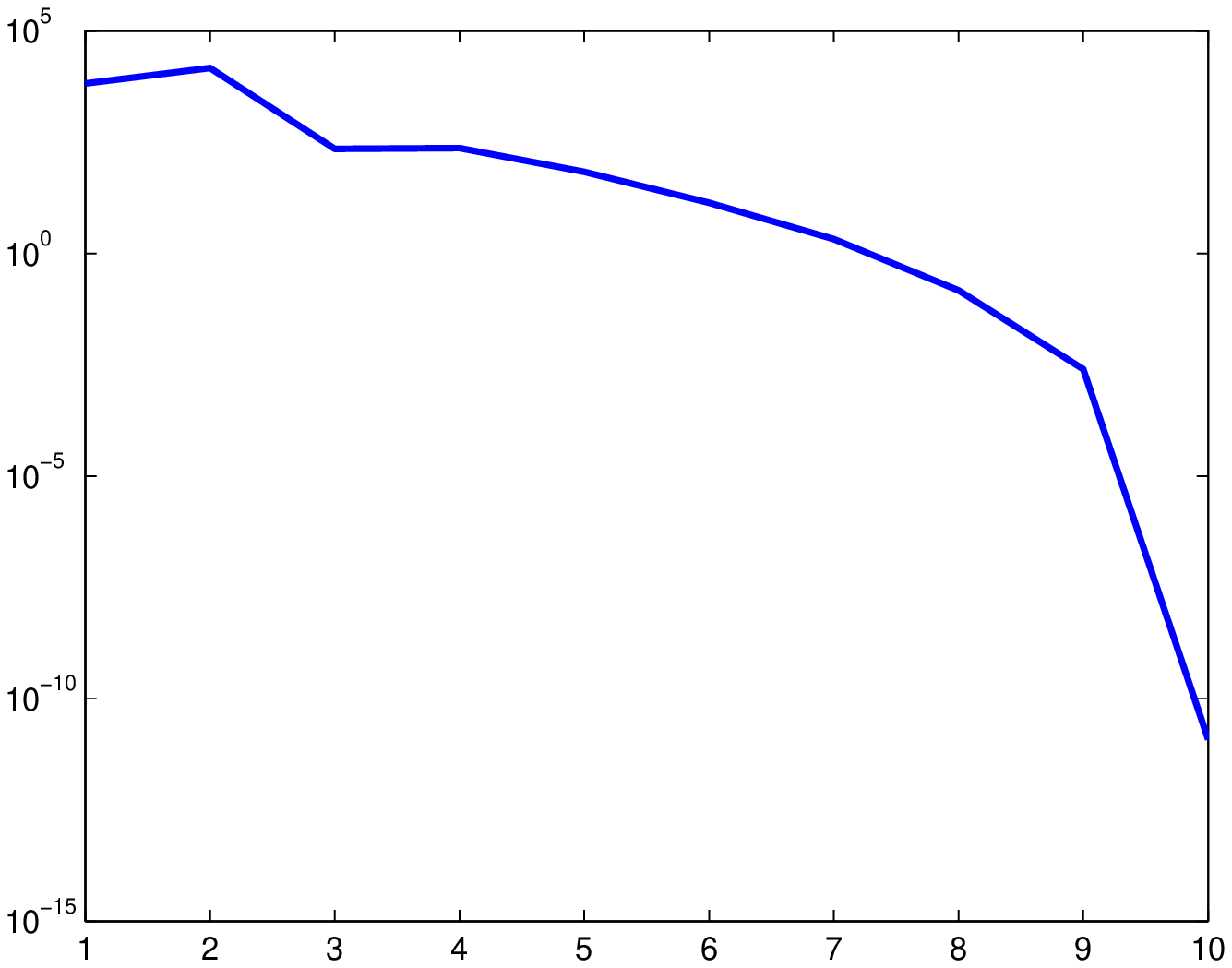}\\
\centering{(e)}
\end{minipage}
\begin{minipage}[t]{3cm}
\includegraphics[height=3cm]{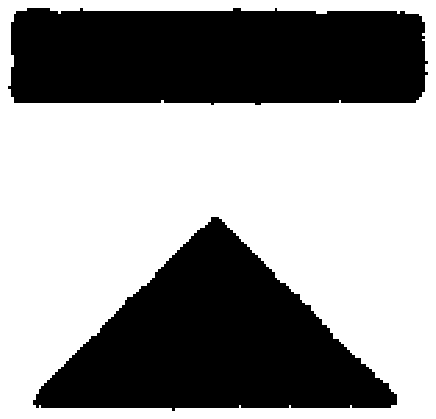}\\
\centering{(f)}
\end{minipage}
\begin{minipage}[t]{3cm}
\includegraphics[height=2.5cm]{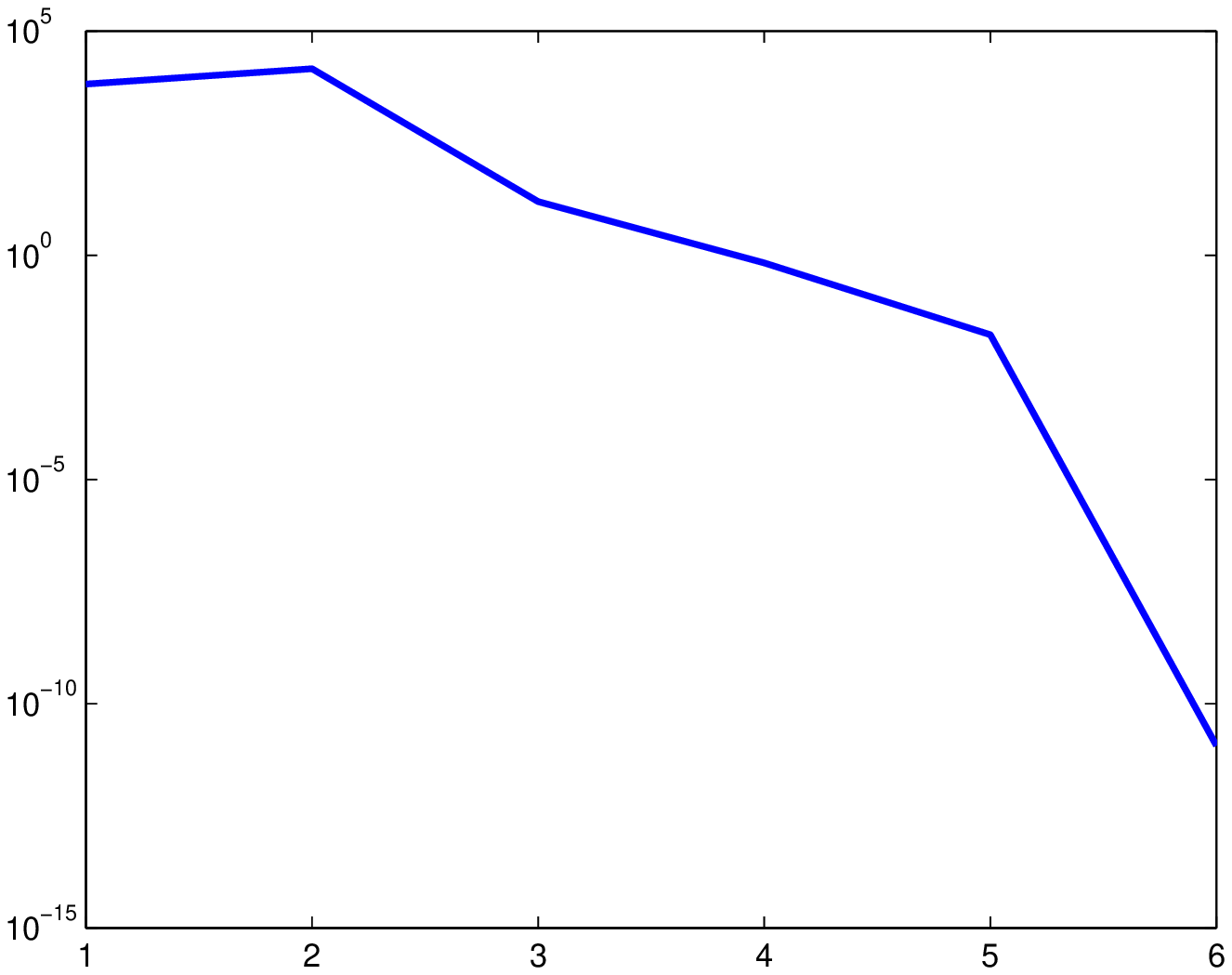}\\
\centering{(g)}
\end{minipage}
\end{center}
\caption{\small\label{result_ROF}(a) Original image, (b) noisy image with 30\% Gaussian noise, (c) noisy image with 50\% Gaussian noise. The results shown in the second row are from 30\% noise; the ones shown in the third row are from 50\% noise. (d) Restored images with $\beta=10^{-3}$, (e) the corresponding residual plots, (f) restored images with $\beta=10^{-5}$, (g) the corresponding residual plots.}
\end{figure}

\subsection{Mumford-Shah-based binary image segmentation}

In \cite{Chan_Vese}, an active contour model based on Mumford-Shah segmentation techniques was proposed. It distinguishes objects from the background by solving
\begin{equation}\label{MS_model}
\min\limits_{\scriptsize\begin{array}{c} u\in BV(\Omega;\{0,1\}),\\ c_1\in\mathbb{R},c_2\in\mathbb{R}\end{array}} \int_\Omega \big[u(c_1-f)^2+(1-u)(c_2-f)^2\big]~\dd x +\beta~J(u),
\end{equation}
where $u\in BV(\Omega;\{0,1\})$ is the indicator function of the set $\Omega^{obj}$, which corresponds to the detected objects, and $\Omega\setminus\Omega^{obj}$ refers to the background set. Thus, for such a $u$ the model \eqref{MS_model} yields a piecewise constant approximation $f^{MS}$ of $f$ with $f^{MS}=c_1$ on $\Omega^{obj}$ and $f^{MS}=c_2$ on $\Omega\setminus\Omega^{obj}$. Here, $c_1$ and $c_2$ are the average values of $f$ on the set $\Omega^{obj}$ and $\Omega\setminus\Omega^{obj}$, respectively. 

Obviously the key step to solving \eqref{MS_model} is to obtain the optimal indicator function $u$. We note that for fixed $c_1$ and $c_2$ the minimization problem \eqref{MS_model} with respect to $u$ is equivalent to \eqref{problem1} with $g\equiv(c_1-f)^2-(c_2-f)^2$. Hence, we can apply our framework to solve this problem. 

In Figure \ref{result_MS}, we display the resulting solutions $u$ when segmenting the specified test images with different choices for $\beta$. In all tests, the choices of all other parameters and the stopping rules are as in the previous section. Initially, we set $u^0(x)=1$ if $f(x)>0.5$ and $u^0(x)=0$ otherwise. This gives $\Omega^{obj}=\{x\in\Omega: f(x)>0.5\}$. Based on the initial $u^0$, we then calculate $c^0_1$ and $c^0_2$ by averaging the data as discussed above.  Algorithmically, whenever we obtain a new $u^k$ by solving \eqref{problem1}, then we update $c^k_1$ and $c^k_2$ based on $u^k$ and go to the next updating step for $u$ until $\|u^k-u^{k-1}\|_1\leq10^{-4}|\Omega|$. Since the total variation of an indicator function on a set is related to the perimeter of the pertinent set \cite{BVBook2}, the resulting segmentation $u$ strongly depends on the selection of $\beta$. If $\beta$ is small, then small-scale objects will also be detected. On the other hand, if $\beta$ is large, then only large-scale objects or objects formed by grouping smaller scale features are usually detected. This behavior can be seen from the results depicted in Figure \ref{result_MS}.  

\begin{figure}[t]
\begin{center}
\begin{minipage}[t]{3.2cm}
\includegraphics[height=3cm]{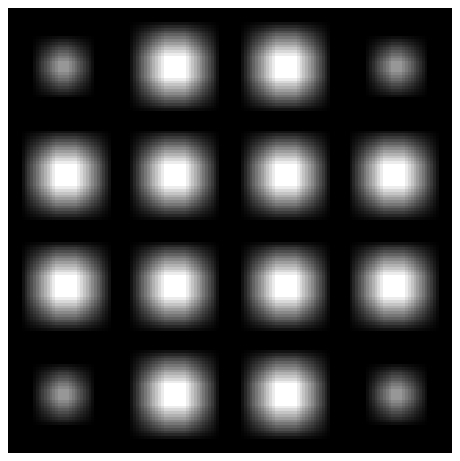}
\end{minipage}
\begin{minipage}[t]{3.2cm}
\includegraphics[height=3cm]{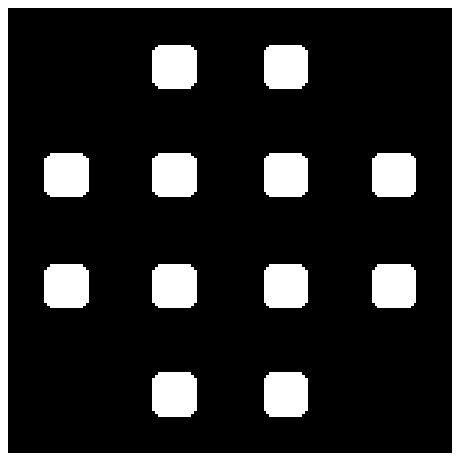}
\end{minipage}
\begin{minipage}[t]{3.2cm}
\includegraphics[height=3cm]{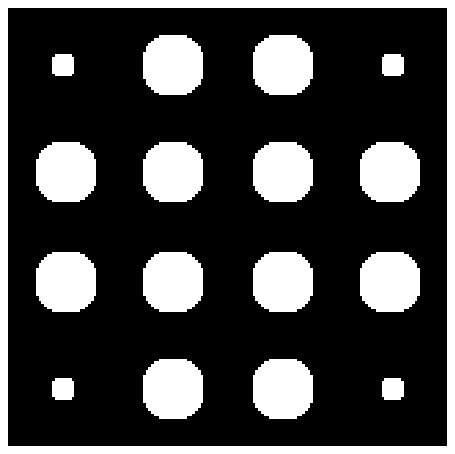}
\end{minipage}\\
\hspace{-2mm}\begin{minipage}[t]{3.2cm}
\includegraphics[height=3.3cm]{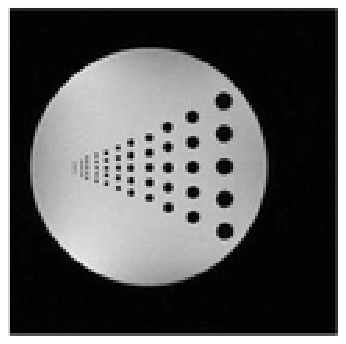}\\
\centering{(a)}
\end{minipage}
\begin{minipage}[t]{3.2cm}
\includegraphics[height=3.3cm]{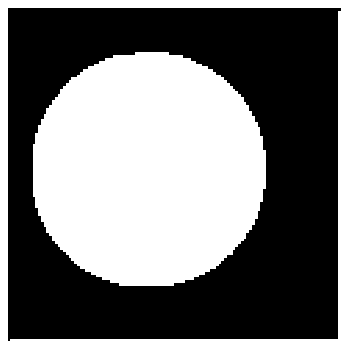}\\
\centering{(b)}
\end{minipage}
\begin{minipage}[t]{3.2cm}
\includegraphics[height=3.3cm]{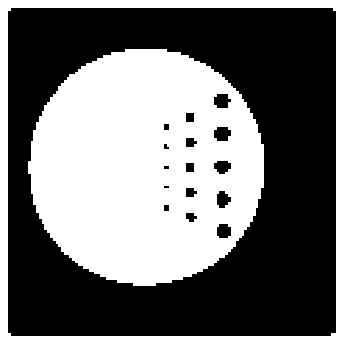}\\
\centering{(c)}
\end{minipage}
\end{center}
\caption{\small\label{result_MS}The results $u$ for Mumford-Shah-based image segmentation with different choices of $\beta$. (a) The test images $f$ for segmentation, (b) the results $u$ with $\beta=8\times10^{-3}$, (c) the results $u$ with $\beta=8\times10^{-5}$.}
\end{figure}

Although the original problem \eqref{problem1} is nonconvex, its relaxation \eqref{problem1relaxed} is convex. The penalized optimization problem \eqref{unmin2}, which is solved by our algorithm, is even strictly convex. Hence, it has a unique global minimizer. However, the model \eqref{MS_model} is non-convex with respect to $(u, c_1, c_2)$, even after relaxation. But numerically in Figure \ref{result_MS2} we demonstrate that our algorithm works reliably and converges to the same minimizer, regardless of the initial choice $u^0$. We also mention that generalizations of the model \eqref{MS_model} as proposed in \cite{MS_Burger} may be treated with our approach, too.

\begin{figure}[t]
\begin{center}
\begin{minipage}[t]{3.2cm}
\includegraphics[height=3cm]{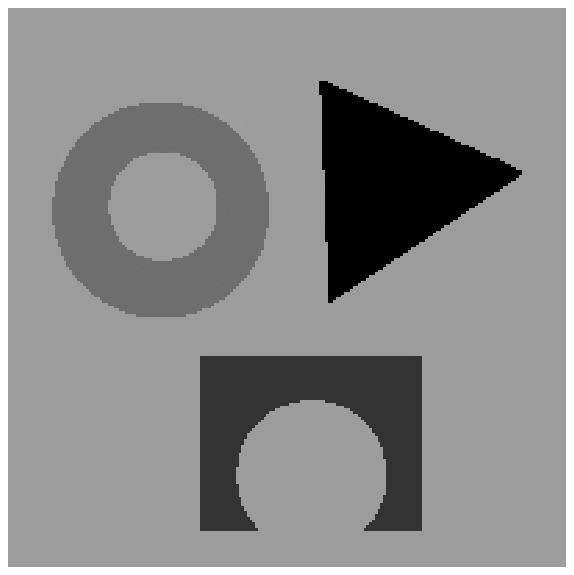}\\
\centering{(a)}
\end{minipage}\\
\begin{minipage}[t]{3.2cm}
\includegraphics[height=3cm]{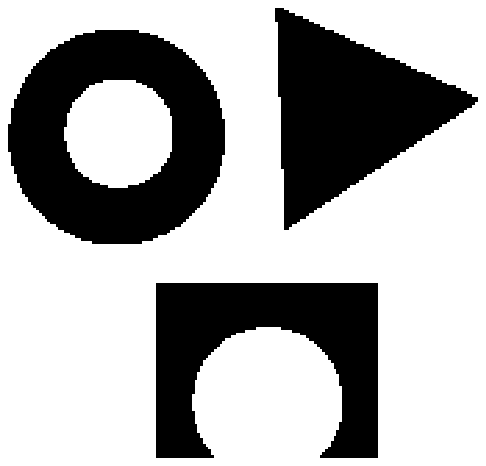}
\end{minipage}
\begin{minipage}[t]{3.2cm}
\includegraphics[height=3cm]{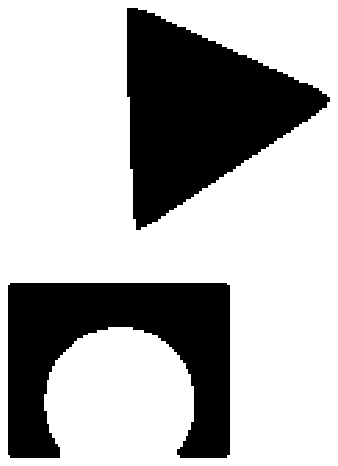}
\end{minipage}
\begin{minipage}[t]{3.2cm}
\includegraphics[height=3cm]{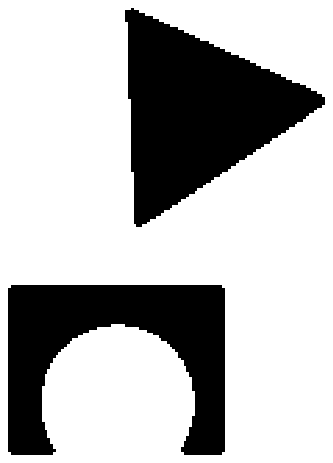}
\end{minipage}\\
\begin{minipage}[t]{3.2cm}
\includegraphics[height=3cm]{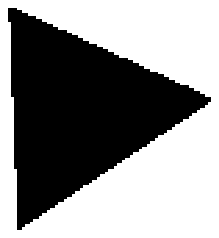}\\
\centering{(b)}
\end{minipage}
\begin{minipage}[t]{3.2cm}
\includegraphics[height=3cm]{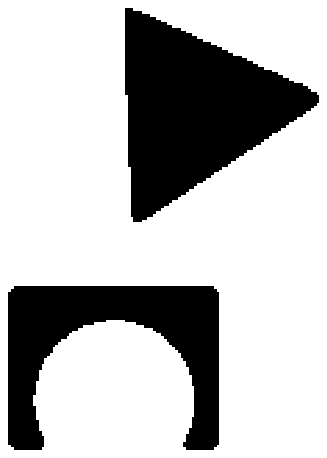}\\
\centering{(c)}
\end{minipage}
\begin{minipage}[t]{3.2cm}
\includegraphics[height=3cm]{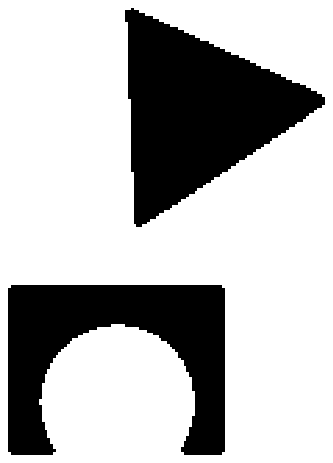}\\
\centering{(d)}
\end{minipage}
\end{center}
\caption{\small\label{result_MS2}The results $u$ for Mumford-Shah-based image segmentation with different initial values $u^0$ and $\beta=10^{-5}$. (a) The test image $f$ for segmentation, (b) the initial $u^0$, (c) $u^1$, (d) the results $u$.}
\end{figure}

\subsubsection{Multi-phase piecewise constant Mumford-Shah model for multi-labeling}

Instead of only separating objects from a background, the multi-phase piecewise constant Mumford-Shah image segmentation problem distinguishes between more than two sets $\Omega_i$ with associated values $c_i$, i.e., it formulates multi-labeling. In this context, we want to represent the input image $f$ by a piecewise constant function $f^{pc}:\Omega\rightarrow\{c_1,\cdots,c_N\}$ with the partitioning $\Omega=\bigcup_{i=1}^N\Omega_i$ and the property that $f^{pc}(x)=c_i$ if $x\in\Omega_i$ for $i=1,\cdots,N$. Introducing an indicator function $u_i\in BV(\Omega;\{0,1\})$ of the set $\Omega_i$ for each $i\in\{1,\cdots,N\}$, we have $f^{pc}=\sum_{i=1}^N c_iu_i$. Extending the two-phase piecewise constant Mumford-Shah model \eqref{MS_model} to the multi-phase case, we utilize the multi-labeling model
\begin{equation}\label{ML_model}
\min\limits_{\scriptsize\begin{array}{c} \vec{u}\in BV(\Omega;\{0,1\})^N,\vec{c}\in\mathbb{R}^N\\ \sum_{i=1}^N u_i(x)=1\ \mbox{for all}\ x\in\Omega\end{array}} \sum_{i=1}^N\int_\Omega  u_i~(c_i-f)^2~\dd x +\beta~\sum_{i=1}^NJ(u_i),
\end{equation}
where $\vec{u}=(u_1,\cdots,u_N)^\top$ and $\vec{c}=(c_1,\cdots,c_N)^\top$. In fact, \eqref{ML_model} is a continuous variant of the Potts model \cite{Potts} with total variation regularization as perimeter penalization. 

Considering the convex relaxation of \eqref{ML_model} in a rather straightforward manner gives
\begin{equation}\label{ML_model2}
\min\limits_{\scriptsize\begin{array}{c} \vec{u}\in BV(\Omega;[0,1])^N,\vec{c}\in\mathbb{R}^N\\ \sum_{i=1}^N u_i(x)=1\ \mbox{for all}\ x\in\Omega\end{array}} \sum_{i=1}^N\int_\Omega  u_i~(c_i-f)^2~\dd x +\beta~\sum_{i=1}^NJ(u_i).
\end{equation}
We emphasize that this relaxation is known to be not necessarily exact; see \cite{Zach_ML} for more details in this vein. As a consequence, it is indeed possible that the minimum of \eqref{ML_model2} is strictly less than the minimum of \eqref{ML_model}, and, thus, in general one cannot expect to obtain a minimizer of \eqref{ML_model} from a minimizer of \eqref{ML_model2} based on Theorem \ref{mainthm1}. Further we mention that recently various research efforts have been devoted to studying and solving the multi-labeling problem; see \cite{ML_SIIMS10,ML_CVPR,ML_Chapter,ML_IJCV,ML_SIIMS11}.

In order to provide an exact convex relaxation context for the multi-phase case, we assume that the number of phases is equal to or less than $2^M$. Referring to the work in \cite{ML_CV}, such a situation can be represented by $M$ indicator functions to represent the partitioning. As a consequence, the model \eqref{MS_model} can be generalized to
\begin{equation}\label{ML_model3}
\min\limits_{ \vec{u}\in BV(\Omega;\{0,1\})^M,\vec{c}\in\mathbb{R}^{2^M}} \sum_{\vec{b}\in\{0,1\}^M}\int_\Omega  Z_{\vec{b}}(\vec{u})(c_{\vec{b}}-f)^2~\dd x +\beta~\sum_{i=1}^MJ(u_i),
\end{equation}
where $\vec{u}=(u_1,\cdots,u_M)^\top$ with $u_i\in BV(\Omega;\{0,1\})$ for each $i=1,\cdots,M$, and
\[
Z_{\vec{b}}(\vec{u})=\prod_{i=1}^M z_{b_i}(u_i), \qquad z_{b_i}(y)=\left\{\begin{array}{ll}1-y, & \mbox{if}\ b_i=0,\\ y, & \mbox{if}\ b_i=1.\end{array}\right. 
\]
It is straightforward to prove that \eqref{ML_model3} has an exact convex relaxation (in the sense of Theorem \ref{mainthm1})
\begin{equation}\label{ML_model4}
\min\limits_{\vec{u}\in BV(\Omega;[0,1])^M,\vec{c}\in\mathbb{R}^{2^M}} \sum_{\vec{b}\in\{0,1\}^M}\int_\Omega  Z_{\vec{b}}(\vec{u})(c_{\vec{b}}-f)^2~\dd x +\beta~\sum_{i=1}^M J(u_i),
\end{equation}
and a solution of \eqref{ML_model3} can be obtained from a solution of \eqref{ML_model4} in the same way as in Theorem \ref{mainthm1}. We solve \eqref{ML_model4} iteratively as follows: For $k\geq 0$ and for $i=1,2,\ldots,M$, each $u_i^k$ is computed by fixing $u_j=u_j^{k}$ for all $1\leq j<i$ and $u_j=u_j^{k-1}$ for $i<j\leq M$. With such a setting, the minimization problem  resulting from \eqref{ML_model4} fits into our framework \eqref{problem1}. Hence, we can employ our proposed solution algorithm. In Figure \ref{ML_result1} we show an example for labelling an image by 2 and 3 indicator functions, respectively.

\begin{figure}[t]
\begin{center}
\begin{minipage}[t]{4.5cm}
\includegraphics[height=3cm]{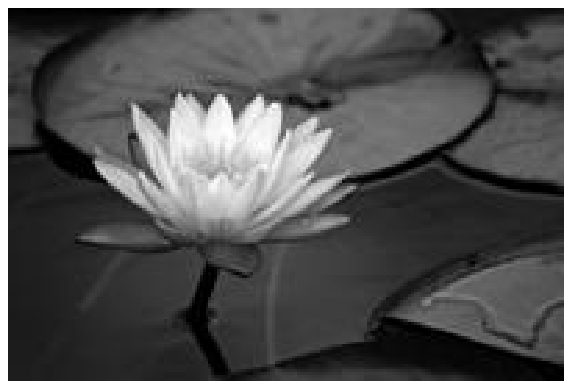}\\
\centering{(a)}
\end{minipage}
\begin{minipage}[t]{4.5cm}
\includegraphics[height=3cm]{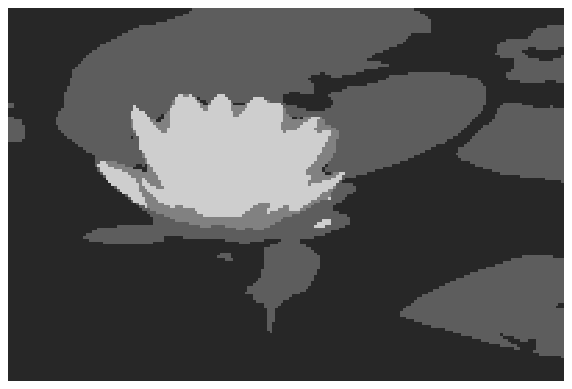}\\
\centering{(b)}
\end{minipage}
\begin{minipage}[t]{4.5cm}
\includegraphics[height=3cm]{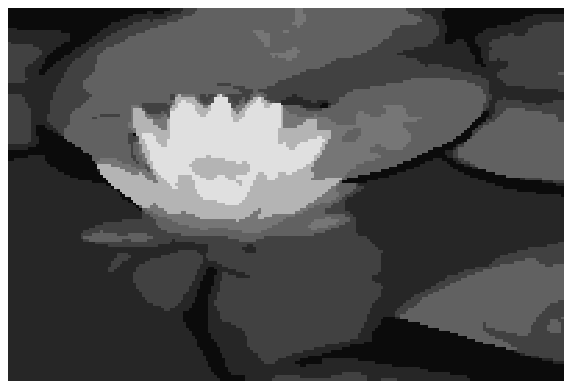}\\
\centering{(c)}
\end{minipage}
\end{center}
\caption{\small\label{ML_result1} (a) The test image $f$ for multilabeling, (b) $f^{pc}$ from solving the 4-phase model, i.e., \eqref{ML_model3} with $M=2$, (c) $f^{pc}$ from solving the 8-phase model, i.e., \eqref{ML_model3} with $M=3$.}
\end{figure}

Although the objective function in \eqref{ML_model3} is convex for each $u_i$ with all the other $u_j$ ($j\neq i$) fixed, the associated overall minimization problem is of course not convex. Thus, the algorithm is not guaranteed to converge to a global minimizer. Moreover, the obtained results may depend on the initial choice of $\vec{u}$. In order to show this influence of the initialization we depict several different initial choices together with the obtained solutions in Figure \ref{ML_result2}.

\begin{figure}[t]
\begin{center}
\begin{minipage}[t]{3.2cm}
\includegraphics[height=3cm]{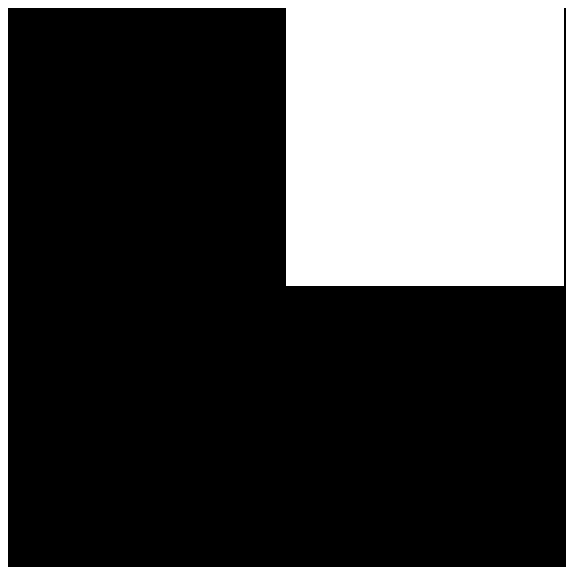}
\end{minipage}
\begin{minipage}[t]{3.2cm}
\includegraphics[height=3cm]{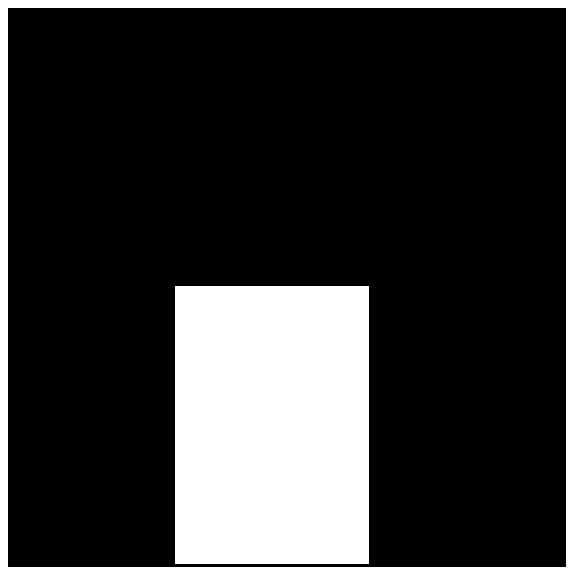}
\end{minipage}
\begin{minipage}[t]{3.2cm}
\includegraphics[height=3cm]{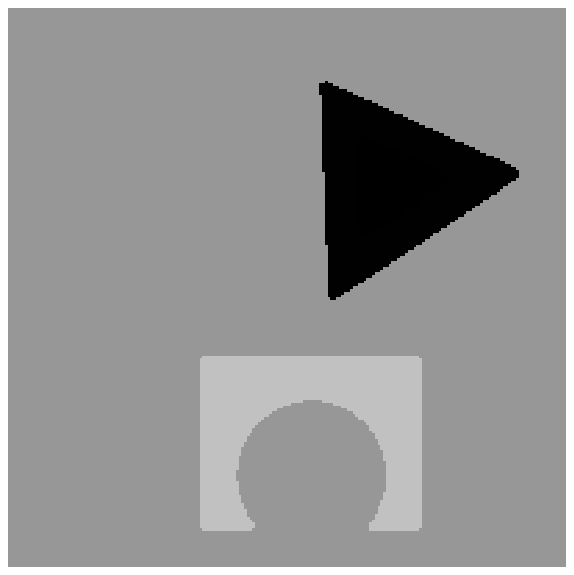}
\end{minipage}\\
\begin{minipage}[t]{3.2cm}
\includegraphics[height=3cm]{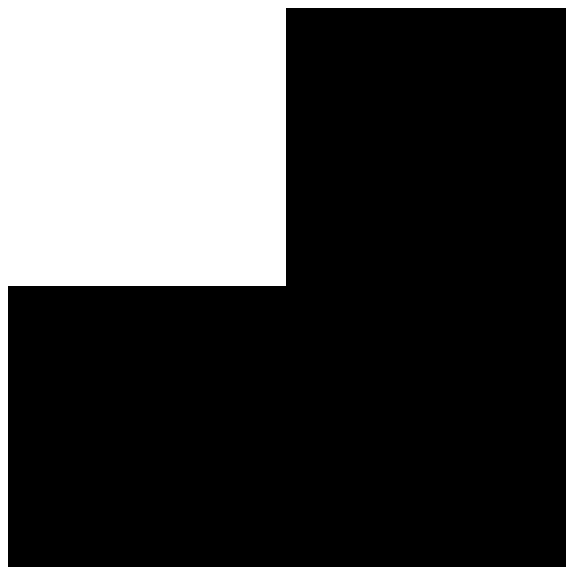}\\
\centering{(a)}
\end{minipage}
\begin{minipage}[t]{3.2cm}
\includegraphics[height=3cm]{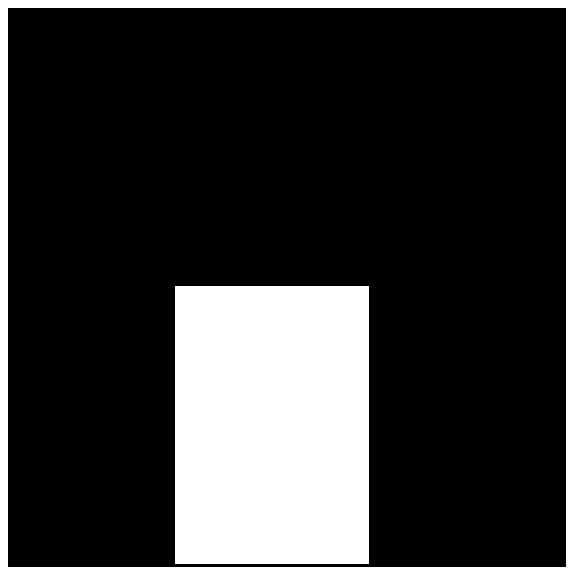}\\
\centering{(b)}
\end{minipage}
\begin{minipage}[t]{3.2cm}
\includegraphics[height=3cm]{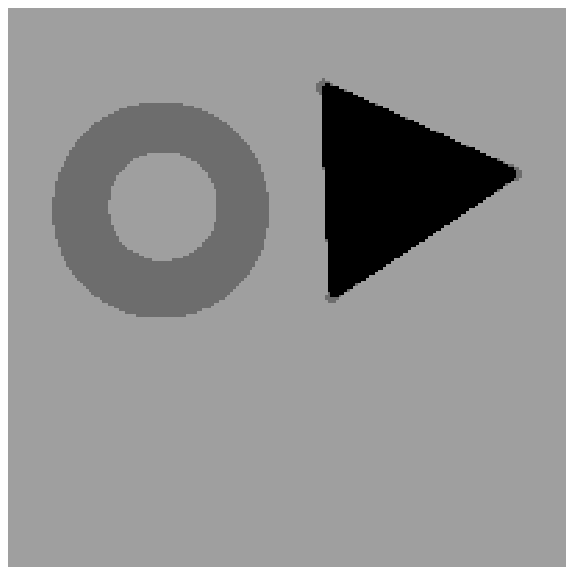}\\
\centering{(c)}
\end{minipage}
\end{center}
\caption{\small\label{ML_result2}The results $f^{pc}$ from solving \eqref{ML_model3} with different initial values $\vec{u}$. (a) the initial $u_1$, (b) the initial $u_2$, (c) the results $f^{pc}$.}
\end{figure}

\subsection{Further applications}
In this section, we provide two more applications fitting our exact relaxation framework, and show their relation to the full model \eqref{full_model}.

\subsubsection{Optimal design of composite membranes}

A well-known problem in the design of composite membranes (cf. \cite{OD_1,OD_2}) is the minimization of the first eigenvalue $\lambda^o$ of the elliptic problem
\[
-\bigtriangleup v+\alpha~u~v=\lambda^o~v
\]
in a domain $\Omega$ subject to homogeneous Dirichlet boundary conditions on $\partial\Omega$ and a volume constraint $\int_\Omega u~\dd x=V$, where $v$ is the eigenfunction. The minimization is carried out with respect to $u$ being the indicator function of some subset of $\Omega$, which is termed an optimal configuration for the data $(\Omega, \alpha, V)$. If we use additional standard perimeter penalization and the Rayleigh variational principle for the first eigenvalue, then the associated optimization problem becomes
\begin{align}\label{OD_1}
 &\min\limits_{(u,v)\in BV(\Omega;\{0,1\})\times H^1_0(\Omega)}\quad \frac{\gamma^o}{2}\frac{\int_\Omega \big(|\nabla v|^2+\alpha ~u~v^2\big)~\dd x}{\int_\Omega v^2~\dd x}+\beta~J(u)\\
&\mbox{s.t.}  \quad \int_\Omega u\ dx\leq V,\nonumber
\end{align}
for fixed $\gamma^o>0$. Here, $H^1_0(\Omega)$ denotes the Sobolev space with distributional derivatives in $L^2(\Omega)^d$ and vanishing trace on $\partial\Omega$.

The problem fits into the framework of \eqref{full_model} with the choice $\mathcal{V}=H^1_0(\Omega)$, the functional
\[
F(v):=\frac{\gamma^o}{2}\frac{\int_\Omega \big(|\nabla v|^2\big)~\dd x}{\int_\Omega v^2~\dd x}
\]
and the operator
\[
G(v):=\frac{\gamma^o}{2}\frac{v^2}{\int_\Omega v^2~\dd x}.
\]
Note that due to the compact embedding of Sobolev spaces, $G$ is a strongly continuous operator from $\mathcal{V}$ to $L^2(\Omega)$. Hence, we conclude that a minimizer of the composite membrane problem can be computed by solving the relaxed problem
\begin{align*}
& \min\limits_{(u,v)\in BV(\Omega;[0,1])\times H^1_0(\Omega)}\quad \frac{\gamma^o}{2}\frac{\int_\Omega \big(|\nabla v|^2+\alpha ~u~v^2\big)~\dd x}{\int_\Omega v^2~\dd x}+\beta~J(u)\\
&\mbox{s.t.}  \quad \int_\Omega u\ dx\leq V,
\end{align*}
and subsequently thresholding to
\[
u^t(x)=\left\{\begin{array}{ll}
         1,& \mathrm{if}\  u>t, \\
         0,& \mathrm{else.}
       \end{array}\right.
\]

\subsubsection{Minimal compliance topology optimization}\label{ssec:mincompliance}

A particularly important problem in structural topology optimization is the minimal compliance problem (cf. \cite{TOP_Book2,TOP_Book}), which can be written as 
\begin{align}\label{TOP_1}
& \min\limits_{(u,v)\in BV(\Omega;\{0,1\})\times H^1_D(\Omega;\mathbb{R}^d)}\quad l(v)\\
&\mbox{s.t.} \quad a_u(v,\tilde{v})=l(\tilde{v}), \quad \mbox{for all $\tilde{v}\in H^1_D(\Omega;\mathbb{R}^d)$},\nonumber\\
  &\int_\Omega u~\dd x\leq V,\nonumber
\end{align}
where $u$ denotes the (material) density, $v$ is the elastic displacement, and $H^1_D(\Omega;\mathbb{R}^d)$ is the subspace of functions $v\in H^1(\Omega;\mathbb{R}^d)$ with vanishing trace on $\varGamma_D\subset \partial\Omega$ with $\partial\Omega$ the boundary of the bounded domain $\Omega$. In addition we have
\begin{align*}
 l(\tilde{v})&:=\int_\Omega f~\tilde{v}~\dd x+\int_{\varGamma_T} t~\tilde{v}~\dd s,\\
a_u(v,\tilde{v})&:=\int_\Omega u~(E(v):\mathbf{C}:E(\tilde{v}))~\dd x,
\end{align*}
where $f$ are body forces, $t$ is the boundary traction on the traction part $\varGamma_T\subset\partial\Omega$, $\mathbf{C}$ denotes a given elasticity tensor, and $E(v):=\frac12(\nabla v+\nabla v^\top)$. The main idea of the formulation is to maximize the material stiffness at a given volume.

Our algorithm can be applied in the minimal compliance context upon approximating \eqref{TOP_1} by
\begin{align}\label{TOP_2}
 \min\limits_{(u,v)\in BV(\Omega;\{0,1\})\times H^1_D(\Omega;\mathbb{R}^d)}&\quad l(v)+\mu\left[l(v)-\int_\Omega u(E(v):\mathbf{C}:E(v))~\dd x\right]\\
\mbox{s.t.}   &\quad \int_\Omega u~\dd x\leq V,\nonumber
\end{align}
where $\mu>0$ represents a large penalty parameter. Since the problem \eqref{TOP_2} is (still) ill-posed, approximate regularized problems are solved in practice in order to compute a stable topological design. In this direction, besides material interpolation schemes with penalization (cf. \cite{TOP_Book}), filter methods and perimeter penalization are of particular importance (cf. \cite{TOP_Book,Filter_TOP}). If $K$ is a suitable nonnegative and smooth filter function approximating the Dirac-delta distribution, and $\varepsilon\in(0,1)$ is a small parameter to prevent the possible singularity of the equilibrium problem, then the regularized problem takes the form
\begin{align}\label{TOP_3}
& \min\limits_{(u,v)\in BV(\Omega;\{0,1\})\times H^1_D(\Omega;\mathbb{R}^d)}\quad \mu\left[l(v)-\int_\Omega (\varepsilon+(1-\varepsilon)(K\star u)) (E(v):\mathbf{C}:E(v))~\dd x\right]\nonumber\\&\qquad\qquad\qquad\qquad+l(v)+\beta~J(u)\\
&\mbox{s.t.}   \quad \int_\Omega u~\dd x\leq V,\nonumber
\end{align}
where  '$\star$' denotes convolution.

We are now able to define the functional $F: H^1_D(\Omega;\mathbb{R}^d)\rightarrow \mathbb{R}^+$ as
\[
 F(v):=(1+\mu) l(v)-\mu\varepsilon\int_\Omega E(v):\mathbf{C}:E(v)~\dd x,
\]
and the operator $G: H^1_D(\Omega;\mathbb{R}^d)\rightarrow L^2(\Omega)$ as
\[
 G(v)u:= -\mu(1-\varepsilon)\int_\Omega (K\star u)~(E(v):\mathbf{C}:E(v))~\dd x.
\]
It is easy to see that $F$ and $G$ satisfy the above conditions, and hence, minimizers of the minimal compliance problem can be obtained as level sets of minimizers of the relaxed problem
\begin{align}\label{TOP_4}
 &\min\limits_{(u,v)\in BV(\Omega;[0,1])\times H^1_D(\Omega;\mathbb{R}^d)}\quad F(v)+\int_\Omega G(v)u~\dd x+\beta~J(u)\\
&\mbox{s.t.}  \quad \int_\Omega u~\dd x\leq V.\nonumber
\end{align}

\section{Conclusion}

The exact relaxation framework considered in this paper allows to relax a class of minimization problems for binary valued unknowns in the space of functions of bounded variation to a convex, constrained minimization problem for a continuously valued unknown function. The solution of the original problem is almost surely gained back by computing level sets of the solution to the relaxed problem. This procedure is stable under certain data perturbations and allows to account for volume constraints, which are important, for instance, in minimal compliance problems in topology optimization. 

As it has been already observed for classical total variation regularization with $L^1$- or $L^2$-data fidelity terms (\cite{MR2559163,semismooth2,MR2219285})
inexact variants of semismooth Newton methods allowing for rather inexact solutions of the associated linear systems are efficient solvers also for problems of the underlying type. This is demonstrated in this work for examples of binary image denoising and multi-phase image segmentation.

The exact relaxation framework (in analysis and numerics) allows to treat various application classes. Besides image processing (denoising, segmentation, labelling), the design of composite membranes and the computation of topological structures with minimal compliance fall into this category.

\bibliographystyle{plain}
\bibliography{reference}
\end{document}